\documentclass[12pt,leqno]{amsart}
\newcommand{\version}{June 14, 2026}
\usepackage{amssymb}
\usepackage{esint}
\usepackage{mathtools}
\usepackage{hyperref}
\usepackage{cleveref}
\makeatletter
\newcommand*{\rom}[1]{\expandafter\@slowromancap\romannumeral #1@}
\makeatother
\title[Sobolev extensions, interpolation inequalities and consequences]
{Sobolev extensions, interpolation inequalities\\ and consequences}
\author{Rupert L. Frank}
\address[R.~L.~Frank]{Mathe\-matisches Institut, Ludwig-Maximilians
Universit\"at M\"unchen, The\-resienstr.~39, 80333 M\"unchen, Germany, and
Munich Center for Quantum Science and Technology, Schel\-ling\-str.~4,
80799 M\"unchen, Germany, and Mathematics 253-37, Caltech, Pasa\-de\-na,
CA 91125, USA} \email{r.frank@lmu.de}
\author{Pekka Koskela}
\address[P.~Koskela]{Department of Mathematics and Statistics, University
of Jyv\"askyl\"a, P.O. Box 35 (MaD), FI-40014, University of Jyv\"askyl
\"a, Finland} \email{pekka.j.koskela@jyu.fi}
 \author{Riddhi Mishra}
\address[R.~Mishra]{Department of Mathematics and Statistics, University
of Jyv\"askyl\"a, P.O. Box 35 (MaD), FI-40014, University of Jyv\"askyl
\"a, Finland} \email{riddhi.r.mishra@jyu.fi}

plus 6pt minus 12pt
\newtheorem{theorem}{Theorem}
\newtheorem{lemma}[theorem]{Lemma}
\newtheorem{corollary}[theorem]{Corollary} \newtheorem{proposition}
[theorem]{Proposition}

\theoremstyle{definition}
\newtheorem{remark}[theorem]{Remark}

\newcommand{\barint}{
\rule[.036in]{.12in}{.009in}\kern-.16in \displaystyle\int }
\newcommand{\barcal}{\mbox{$ \rule[.036in]{.11in}{.007in}\kern-.128in\int
$}}
\newcommand{\diam}{\mathop{\mathrm {diam}}}

\def\diam{\operatorname{diam}}

\def\supp{{\rm supp\,}} \def\dist{{\rm dist\,}}
\def\loc{{\rm loc\,}}

 \renewcommand{\epsilon}{\varepsilon}

 \renewcommand{\phi}{\varphi} \newcommand
{\R}{\mathbb{R}}

 \DeclareMathOperator{\tr}

\subjclass[2020]{46E35}
\keywords{Sobolev function, Sobolev extension domain}
\date{\version}
\thanks{The first author acknowledges partial support through US National
Science Foundation grant DMS-1954995, as well as through the German
Research Foundation through EXC-2111-390814868 and TRR 352-Project-ID
470903074. The second and third authors have been supported by the Academy
of Finland via Centre of Excellence in Analysis and Dynamics Research
(Project number 323960).}
\begin{document}
\begin{abstract}
 We prove Sobolev interpolation inequalities on extension domains that
have a form reminiscent of the corresponding whole-space inequalities.
This form is crucial in certain applications, which we discuss as well.
The technical key ingredient is the notion of a Lebesgue $W^{1,p}$-
extension domain, which we introduce here, and our proof that, for $1<p<
\infty$, any $W^{1,p}$-extension domain is a Lebesgue $W^{1,p}$-extension
domain. \end{abstract}

\maketitle
\section{Introduction and main results}
\subsection{A Sobolev interpolation inequality}
Let $\Omega\subset\R^n$ be a domain (a connected open set) and $1\leq p
\leq \infty$. Recall that the Sobolev space $W^{1,p}(\Omega)$ consists of
all $L^p$-integrable functions on $\Omega$ whose first order
distributional derivatives belong to $L^p(\Omega)$. 
the norm 
We say that $E$ is a $W^{1,p}(\Omega)$-extension operator if
\begin{equation}
 E:W^{1,p}(\Omega)\to W^{1,p}(\mathbb{R}^n),
\end{equation}
i.e., for every $u\in W^{1,p}(\Omega)$ there exists an $E(u)\in W^{1,p}
(\mathbb{R}^n)$ with $E(u)\big|_\Omega\equiv u$. We say that the extension
operator is bounded if there is a constant $C\geq 1$ such that \[\|E(u)\|_
{W^{1,p}(\mathbb{R}^n)}\leq C\|u\|_{W^{1,p}(\Omega)}
\qquad\text{for all}\ u\in W^{1,p}(\Omega) \,.
\] A domain $\Omega$ is said to be a $W^{1,p}$-extension domain if there
is a bounded $W^{1,p}(\Omega)$-extension operator. We emphasize that we do
not assume that $E$ is linear.

We are interested in Sobolev interpolation inequalities for extension
domains. The model inequality on $\R^n$ is of the form \begin
{equation}\label{eq1.2}
 \| u \|_{L^r(\R^n)} \leq C \| \nabla u \|_{L^{p}(\R^n)}^
 \theta \|u \|_{L^s(\R^n)}^{1-\theta}
\end{equation}
for $u\in L^s(\R^n)\cap L^1_\loc(\R^n)$ with $\nabla u \in L^p(\R^n)$.
Here the exponents satisfy $1<p<\infty$, $0<s<\infty$ and \begin{equation}
 \label{eq:assr}
 \begin{cases}
 s\leq r \leq \frac{np}{n-p} & \text{if}\ p< n \,,\\
 s\leq r<\infty & \text{if}\ p=n>1 \,,\\
 s\leq r \leq \infty & \text{if}\ p>n \ \text{or}\ p=n=1 \,,
 \end{cases}
\end{equation}
and $\theta\in[0,1]$ satisfies
\begin{equation}
 \label{eq:asstheta}
 \theta \left( \frac{1}{p} - \frac{1}{n} \right) + \left(1-\theta\right)\,
\frac {1}{s}
 = \frac{1}{r} \,.
\end{equation} Moreover, the constant $C$ in \eqref{eq1.2} depends only on
$n$, $p$, $s$ and $r$.

A feature of \eqref{eq1.2} that is crucial in several applications is that
it involves a $\theta$-geometric mean of $ \| \nabla u \|_{L^{p}(\R^n)}$
and $\|u \|_{L^s(\R^n)}$. Intuitively, this means that only a fraction $
\theta$ of the gradient, and not the whole gradient, is necessary to
control an $L^r$-norm.

In view of the elementary inequality $a^\theta b^{1-\theta} \leq \theta a
+ (1-\theta) b$, inequality \eqref{eq1.2} with the $\theta$-geometric mean
implies a corresponding inequality with the arithmetic mean. Moreover, in
$\R^n$, a simple scaling argument shows that the inequality with the $
\theta$-geometric mean and with the arithmetic mean are equivalent. This
scaling argument, however, is no longer available when the whole space is
replaced by an extension domain and a straightforward application of the
extension property only yields the interpolation inequality with the
arithmetic mean, viz. \begin{equation} \label{eq:sobintadditive}
 \| u \|_{L^r(\Omega)} \leq C \left( \| u \|_{W^
 {1,p}(\Omega)} + \|u \|_{L^s(\Omega)} \right).
\end{equation}
This, however, is not enough in certain applications, some of which we
discuss below.

The following is our main result. It provides a Sobolev interpolation
inequality with the $\theta$-geometric mean on extension domains.

\begin{theorem}\label{sobint}
Let $1<p<\infty$, let $\Omega\subset\R^n$ be a $W^{1,p}$-extension domain
and let $0<s<\infty$. Then for all $r$ as in \eqref{eq:assr} one has
\begin{equation}
 \label{eq:embedding}
 W^{1,p}(\Omega)\cap L^s(\Omega) \subset L^r(\Omega)
\end{equation}
and for all $u\in W^{1,p}(\Omega)\cap L^s(\Omega)$ we have
\begin{equation}
 \label{eq:sobintext}
 \| u \|_{L^r(\Omega)} \leq C \| u \|_{W^{1,p}(\Omega)}^\theta \|u \|_
{L^s
 (\Omega)}^{1-\theta} \,,
\end{equation}
where $\theta\in[0,1]$ is given by \eqref{eq:asstheta} and the constant $C
$ only depends on $\Omega$, $p$, $s$ and $r$.
\end{theorem}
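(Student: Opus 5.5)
The key difficulty is that the extension $E(u)$ need not have $L^s$ norm controlled by $\|u\|_{L^s(\Omega)}$. I would therefore use the notion announced in the abstract: a Lebesgue $W^{1,p}$-extension domain. This is an extension operator that is bounded in $W^{1,p}$ and, in addition, controls the $L^s$-norm (or level-set sizes) of $E(u)$ by the corresponding quantity on $\Omega$.

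Granting that any $W^{1,p}$-extension domain with $1<p<\infty$ admits such an extension, the argument runs as follows. Given $u\in W^{1,p}(\Omega)\cap L^s(\Omega)$, set $v=E(u)$. We then have $\|\nabla v\|_{L^p(\R^n)}\le \|v\|_{W^{1,p}(\R^n)}\le C\|u\|_{W^{1,p}(\Omega)}$, and we want $\|v\|_{L^s(\R^n)}\le C\|u\|_{L^s(\Omega)}$. The whole-space inequality \eqref{eq1.2} applied to $v$ then gives
\[
\|u\|_{L^r(\Omega)}\le\|v\|_{L^r(\R^n)}\le C\|\nabla v\|_{L^p}^\theta\|v\|_{L^s}^{1-\theta}\le C\|u\|_{W^{1,p}(\Omega)}^\theta\|u\|_{L^s(\Omega)}^{1-\theta}.
\]
This proves both \eqref{eq:embedding} and \eqref{eq:sobintext}.

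The main work is building the extension with $L^s$ control, which I would do by truncation and level sets. Since $E$ need not be linear, it is natural to first replace $E$ by a modified operator. For $u\ge0$ (after splitting $u=u^+-u^-$ and using that $u^\pm\in W^{1,p}$ with no larger norms), decompose $u$ dyadically into the layers $u_k=\min\{(u-2^k)^+,2^k\}$. Extend each layer by $E$, then truncate the extension to $[0,2^k]$ and cut it off to a neighbourhood where it is genuinely needed.

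The key point, which I expect to be the main obstacle, is a measure estimate. If $w\in W^{1,p}(\Omega)$ has $0\le w\le 1$ and $|\{w>0\}|$ is small, then one can find an extension $\tilde w$ with $0\le\tilde w\le1$ and $|\{\tilde w>0\}|\le C|\{w>0\}|$. I would try to obtain this from the measure density property of $W^{1,p}$-extension domains (Hajłasz–Koskela–Tuominen: $|B(x,r)\cap\Omega|\ge c r^n$ for $x\in\Omega$, $r\le1$). Combine it with a capacity argument: the sets where $E(w)>1/2$ outside $\Omega$ but far from $\{w>0\}$ can be cut away by a Lipschitz cutoff adapted to a Whitney-type covering. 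The cost in gradient is controlled because, near $\partial\Omega$, the extension's size is dominated by averages over $\Omega$-balls, whose measure is comparable by measure density. Here $p>1$ should enter via maximal-function boundedness. Summing the layers then gives $|\{v>2^{k}\}|\lesssim|\{u>2^{k-1}\}|$, hence $\|v\|_{L^s}\lesssim\|u\|_{L^s}$ for every $s$ simultaneously. The gradient bound survives because the truncated layers have disjoint gradient supports up to bounded overlap.

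As a fallback if the full construction is too delicate, I would first prove the weak form $|\{|v|>t\}|\le C|\{|u|>ct\}|$ for all $t>0$. That is exactly what the layer-cake formula needs to give the $L^s$ bound.
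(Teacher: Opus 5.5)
\textbf{There is a genuine gap: the extension with $L^s$ control, which is the whole technical content, is not established.}

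Your top-level reduction is the same as the paper's argument for $s>1$. Once $u$ has an extension $v$ with $\|\nabla v\|_{L^p(\R^n)}\lesssim\|u\|_{W^{1,p}(\Omega)}$ and $\|v\|_{L^s(\R^n)}\lesssim\|u\|_{L^s(\Omega)}$, the whole-space inequality \eqref{eq1.2} finishes.

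The decisive failure is the recombination of the layers. The gradients of $u_k=\min\{(u-2^k)^+,2^k\}$ have disjoint supports \emph{inside} $\Omega$. After each layer has been extended, truncated and cut off separately, nothing makes the transition sets $\{0<\tilde u_k<2^k\}$ outside $\Omega$ disjoint or boundedly overlapping. For an averaging extension, for instance, every layer is strictly between its bounds at an exterior point whose reference ball in $\Omega$ sees $u$ range over many dyadic levels. So you only get $\|\nabla v\|_{L^p}\le\sum_k\|\nabla\tilde u_k\|_{L^p}$. This is an $\ell^1$-sum and is not controlled by $\|\nabla u\|_{L^p(\Omega)}=(\sum_k\|\nabla u_k\|_{L^p(\Omega)}^p)^{1/p}$; the same happens for the $L^p$ part of the $W^{1,p}$-norms. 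Because the truncations and $k$-dependent cutoffs destroy any linearity, $\sum_k\tilde u_k$ does not telescope back to a controlled extension of $u$.

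The support lemma is also only asserted. The mechanism you suggest (extension dominated by averages over $\Omega$-balls, maximal functions) controls sizes, not supports. An averaging extension is positive on every Whitney cube whose reference ball meets $\{w>0\}$. That union can have measure of order one however small $|\{w>0\}|$ is. Averaging only gives $|\{\tilde w>\lambda\}|\lesssim\lambda^{-1}|\{w>0\}|$, which degenerates as $\lambda\to0$. This is exactly why maximal-function arguments stop at exponents $q>1$.

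\textbf{What the paper does instead.} For $s>1$ no layers are needed. The Whitney extension $E_1u=\Phi\sum_{i\in J}(u)_{B_i^\star}\phi_i$, with $B_i^\star=B(x_i^\star,r_i)\cap\Omega$, satisfies $|E_1u|\le C\,M(v)$ for the zero extension $v$ of $u$, by measure density (Proposition~\ref{HKT}). The maximal theorem then gives $\|E_1u\|_{L^q(\R^n)}\le C_q\|u\|_{L^q(\Omega)}$ for all $q\in(1,\infty]$ at once. The gradient bound comes from comparing neighbouring averages of a given extension $Eu$ via $M(|\nabla Eu|)$; this is Theorem~\ref{thm1}. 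For $0<s\le1$ the paper builds no extension at all. It invokes the truncation theorems of \cite{BCLS}, which upgrade the inequality from one exponent to all $s>0$ by applying it to your dyadic layers on $\Omega$ itself, where their gradients really are disjoint.

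\textbf{A possible repair of your route.} If you want to keep the level-set idea, use medians rather than averages. Take $a_i$ a median of $u$ on $B_i^\star$ and set $\tilde v=\Phi\sum_{i\in J}a_i\phi_i$ outside $\Omega$. Then $|a_i|>t$ forces $|\{|u|>t\}\cap B_i^\star|\ge\frac12|B_i^\star|$. Hence the exterior part of $\{|\tilde v|>t\}$ lies in $\{M\chi_{\{|u|>t\}}\ge c\}$, whose measure is at most $C|\{|u|>t\}|$ by the weak $(1,1)$ bound. A median differs from the mean by at most twice the mean oscillation, so the gradient estimates go through as before. You would still have to check that $\tilde v\in W^{1,p}(\R^n)$, e.g.\ by comparison with the averaging extension. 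This would give $L^s$ control for every $s>0$ directly, without \cite{BCLS}.
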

\begin{remark} (a) The embedding \eqref{eq:embedding} is well-known,
but a direct proof based on the extension property only gives the
`arithmetic mean' bound \eqref{eq:sobintadditive}, see e.g. \cite{WYZ24}. The thrust of our
result is the `$\theta$-geometric mean' bound \eqref{eq:sobintext}. This $
\theta$-geometric mean bound is well-known when $\Omega$ has uniformly
Lipschitz continuous boundary, based on the explicit construction of an
extension operator in this case. In contrast, our theorem is valid for
\emph{any} extension domain, irrespectively of how the extension operator
is constructed.
 
 (b) The specific value of $\theta$ in Theorem \ref{sobint} is the
same as in the whole-space inequality \eqref{eq1.2} and the latter is
uniquely determined by a scaling argument. Note that if the equality $$
\| u \|_{L^r(\Omega)} \leq C \| u \|_{W^{1,p}
 (\Omega)}^\delta \|u \|_{L^s(\Omega)}^{1-\delta} $$ holds with
some exponent $\delta\in[0,1]$ and $r>s$, then necessarily $$ \delta
\geq \theta \,. $$ Indeed, take $u\in C^1_c(\R^n)$ and apply the assumed 
inequality to $u((\cdot -x_0)/\epsilon)$ for some point $x_0\in\Omega$.
 Comparing the powers of both sides as $\epsilon\to 0$, we find $$
\delta
 \left( \frac1p - \frac1n \right) + \left(1-\delta\right)\, \frac 1s
\leq
 \frac1r \,, $$ from which we easily conclude the claimed inequality $
 \delta\geq\theta$.
 
 (c) The proof of Theorem \ref{sobint} is easy when $p< n$, but when
$p\geq n$
 it relies on our new Theorem \ref{thm1} below about Lebesgue $W^
{1,p}$-extension operators. While the proof that we give for $p\geq n$
also works for $p< n$, we now present a simple direct argument in the
latter case and
 explain the difficulty of extending it to $p\geq n$.
 For $p<n$ we set $p^*:=np/(n-p)$. Then the extension property implies
that
 $$ \| u \|_{L^{p^*}(\Omega)} \leq C \| u \|_{W^{1,p}(\Omega)} \,. $$
(This is the limiting case of Theorem \ref{sobint}, corresponding to $
\theta
 =1$.) Now given $r,s,\theta$ as in the theorem we have, by H\"older's inequality, $$ \| u \|_{L^r(\Omega)} \leq \| u \|_{L^{p^*}(\Omega)}^
 {\theta} \| u \|_{L^s(\Omega)}^{1-\theta} \,. $$ Combining this with the
 bound on $\| u \|_{L^{p^*}(\Omega)}$ implies the inequality in the
 theorem.
 For comparison, let us see how a corresponding argument fails to
prove the
 theorem in case $p\geq n$. For simplicity we give restrict ourselves
to
 $p>n$. By the extension property, we have $$ \| u \|_{L^\infty
(\Omega)}
 \leq C \| u \|_{W^{1,p}(\Omega)} \,. $$ Combining this with the bound
$$
 \| u \|_{L^r(\Omega)} \leq \| u \|_{L^\infty(\Omega)}^{1-s/r} \| u \|
_{L^s
 (\Omega)}^{s/r} $$ gives $$ \| u \|_{L^r(\Omega)} \leq \| \nabla
u \|_{W^
 {1,p}(\Omega)}^{1-s/r} \| u \|_{L^s(\Omega)}^{s/r} \,. $$ This
is a bound
 as in the theorem, but with the (optimal) power $\theta$ replaced by
the larger power $1-s/r$. The argument for $p=n$ is similar, but
only leads to
 powers even greater than $1-s/r$.
 
 (d) In the case $p=n>1$ our proof (based on \cite[Theorem 3.4]{BCLS})
also gives exponential integrability of $u$ and it yields a bound on the
behavior of the constant $C$ as $r\to\infty$. For the sake of conciseness
we refrain from stating this explicitly. \end{remark}
Applying the inequality in Theorem \ref{sobint} to the function $u-u_
\Omega$ and using the Poincar\'e inequality, we arrive at the following
consequence.
\begin{corollary}
Let $1<p<\infty$, let $\Omega\subset\R^n$ be a $W^{1,p}$-extension domain
of finite measure and let $0<s<\infty$. Then, for $r$ and $\theta$ as in
Theorem \ref{sobint} and any $u\in W^{1,p}(\Omega)\cap L^s(\Omega)$,
$$
\| u - u_\Omega \|_{L^r(\Omega)} \leq C \| \nabla u \|_{L^{p}(\Omega)}^
\theta \|u - u_\Omega \|_{L^s(\Omega)}^{1-\theta} \,.
$$
\end{corollary}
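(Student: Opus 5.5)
The plan is to apply Theorem \ref{sobint} to $v:=u-u_\Omega$ and then replace the full $W^{1,p}$-norm of $v$ by $\|\nabla u\|_{L^p(\Omega)}$ using a Poincar\'e inequality on $\Omega$. Everything is well defined: since $|\Omega|<\infty$ we have $W^{1,p}(\Omega)\subset L^1(\Omega)$, so $u_\Omega$ is a finite number. Also constants belong to $L^s(\Omega)\cap W^{1,p}(\Omega)$. Hence $v\in W^{1,p}(\Omega)\cap L^s(\Omega)$ and $\nabla v=\nabla u$.

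\emph{Step 1: apply Theorem \ref{sobint}.} This gives
\[
\|v\|_{L^r(\Omega)}\le C\|v\|_{W^{1,p}(\Omega)}^\theta\|v\|_{L^s(\Omega)}^{1-\theta},
\qquad
\|v\|_{W^{1,p}(\Omega)}\le \|v\|_{L^p(\Omega)}+\|\nabla u\|_{L^p(\Omega)}.
\]

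\emph{Step 2: Poincar\'e inequality.} It then suffices to prove
\[
\|u-u_\Omega\|_{L^p(\Omega)}\le C\|\nabla u\|_{L^p(\Omega)} .
\]
I would derive this from the extension property by the standard compactness argument. First, a $W^{1,p}$-extension domain of finite measure is bounded. This can be quoted, or argued as follows: extension domains satisfy the measure density condition $|B(x,t)\cap\Omega|\ge ct^n$ for $x\in\Omega$, $0<t\le1$. By finite measure, this forces $\Omega$ to be covered by finitely many balls.

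Second, for bounded $\Omega$ the embedding $W^{1,p}(\Omega)\hookrightarrow L^p(\Omega)$ is compact. To see this, compose the extension operator (bounded, though possibly nonlinear) with multiplication by a cutoff equal to $1$ on a ball containing $\Omega$. Then apply Rellich--Kondrachov on that ball. This works because the image of a bounded set is bounded, and linearity is not needed.

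Third, suppose the Poincar\'e inequality fails. Then there are $u_k$ with $(u_k)_\Omega=0$, $\|u_k\|_{L^p(\Omega)}=1$ and $\|\nabla u_k\|_{L^p(\Omega)}\to0$. By compactness a subsequence converges in $L^p$ to some $w$ with $\nabla w=0$. Since $\Omega$ is connected, $w$ is constant, and $w_\Omega=0$ forces $w=0$. This contradicts $\|w\|_{L^p(\Omega)}=1$.

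\emph{Conclusion and main obstacle.} Combining the steps, $\|v\|_{W^{1,p}(\Omega)}\le C\|\nabla u\|_{L^p(\Omega)}$, and inserting this into Step 1 gives the corollary. The only genuinely nontrivial point is the Poincar\'e inequality, specifically the boundedness of finite-measure extension domains. This is known (Hajłasz--Koskela--Tuominen measure density), and I would cite it rather than reprove it.
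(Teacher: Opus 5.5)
Your proposal is correct and is the argument the paper itself indicates: apply Theorem \ref{sobint} to $u-u_\Omega$, then use the Poincar\'e inequality $\|u-u_\Omega\|_{L^p(\Omega)}\le C\|\nabla u\|_{L^p(\Omega)}$ to replace $\|u-u_\Omega\|_{W^{1,p}(\Omega)}$ by $\|\nabla u\|_{L^p(\Omega)}$. The paper takes the Poincar\'e inequality for granted, and your justification of it is sound: boundedness follows from the measure density in Proposition \ref{HKT}, compactness from the possibly nonlinear extension operator combined with Rellich--Kondrachov, and the conclusion from the usual contradiction argument using connectedness.
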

\subsection{Lebesgue and Lebesgue--Dirichlet extension operators}
 We now present the technical core of the proof of Theorem \ref{sobint}.
 We say that a bounded $W^{1,p}(\Omega)$-extension operator $E$ is a \emph
{Lebesgue $W^{1,p}(\Omega)$-extension operator} if for every $1<q\leq
\infty$ there is a constant $C_q$ such that  \begin{equation}\label{lqeq}
 \|E(u)\|_{L^q(\mathbb{R}^n)}\leq C_q\|u\|_{L^q(\Omega)}
 \qquad\text{for all}\ u\in L^{q}(\Omega) \,.
 \end{equation}
We say that a bounded $W^{1,p}(\Omega)$-extension operator $E$ is a \emph
{Lebesgue--Dirichlet $W^{1,p}(\Omega)$-extension operator} if it is a
Lebesgue $W^{1,p}(\Omega)$-extension operator and if there is a constant $C$ such that \begin{equation}\label{greq}
 \|\nabla E(u)\|_{L^p(\mathbb{R}^n)}\leq C\|\nabla u\|_{L^p(\Omega)}
 \qquad\text{for all}\ u \in W^{1,p}(\Omega) \,.
\end{equation}
We say that $\Omega$ is a Lebesgue--Dirichlet $ W^{1,p}$-extension domain
if it admits a Lebesgue-Dirichlet $W^{1,p}(\Omega)$-extension operator.

 The $W^{1,p}$-extension operators constructed in \cite{Stein70} for
bounded Lipschitz domains are linear Lebesgue--Dirichlet $W^{1,p}$-
extension operators. Moreover, a half space admits a Lebesgue--Dirichlet
$W^{1,p}$-extension operator.
\begin{theorem}\label{thm1}
 Let $1<p<\infty$. Then every $W^{1,p}$-extension domain admits a linear
Lebesgue $W^{1,p}$-extension operator.

 However, there are no bounded Lebesgue--Dirichlet $W^{1,p}$-extension
domains.

 An unbounded domain $\Omega$ admits a Lebesgue--Dirichlet $ W^{1,p}
(\Omega)$-extension operator if and only if there is an extension operator
\begin{equation}\label{1.4}
 E: W^{1,p}(\Omega)\to W^{1,p}_{\loc}(\mathbb{R}^n)
 \end{equation}
 such that there is a constant $C$ with
 \begin{equation}\label{1.5}
 \| \nabla E(u)\|_{L^p(\mathbb{R}^n)}\leq C \|\nabla u\|_{L^p(\Omega)}
 \qquad\text{for all}\ u\in W^{1,p}(\Omega) \,.
 \end{equation}
\end{theorem}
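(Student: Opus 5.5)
The theorem has three parts, and I treat them separately.

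\emph{Part 1: every $W^{1,p}$-extension domain admits a linear Lebesgue extension operator.} I would use known structure theory rather than the given (possibly nonlinear) operator. For $1<p<\infty$, a $W^{1,p}$-extension domain satisfies the measure density condition $|B(x,r)\cap\Omega|\geq c r^n$ for $x\in\Omega$ and $0<r\leq 1$ (Haj\l asz--Koskela--Tuominen). By the same authors it also admits a \emph{linear} bounded extension operator of Whitney type: one takes a Whitney decomposition $\{Q_j\}$ of $\R^n\setminus\overline\Omega$ (note $|\partial\Omega|=0$ by measure density), a partition of unity $\{\phi_j\}$, and for small cubes reflected sets $Q_j^*\subset\Omega$ with $|Q_j^*|\sim|Q_j|$ and $\dist(Q_j,Q_j^*)\lesssim \ell(Q_j)$, and defines $Eu=\sum_j\phi_j\, u_{Q_j^*}$ on cubes of size $\le1$, and $0$ beyond. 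The key point is that such an operator averages $u$ over comparable-size pieces of $\Omega$, with bounded overlap of the $Q_j^*$ (guaranteed by the measure density). Then $\|Eu\|_{L^q(\R^n\setminus\Omega)}^q\lesssim\sum_j|Q_j|\,|u_{Q_j^*}|^q\lesssim\sum_j\int_{Q_j^*}|u|^q\lesssim\|u\|_{L^q(\Omega)}^q$ by Jensen, for all $q$ (and trivially for $q=\infty$). The main obstacle is that the HKT operator is usually built with a local Sobolev--Poincar\'e/maximal-type averaging (for instance over balls $B(x_j, C\ell(Q_j))\cap\Omega$) rather than a pure average. I would check that the construction can be arranged with plain averages over sets of measure $\sim|Q_j|$ and bounded overlap, which yields the $L^q$ bound while the $W^{1,p}$ bound is the original argument. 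If bounded overlap fails, I would use instead the uniform bound $\sup_{x}|Eu(x)|\lesssim Mu$, with $M$ the maximal function of the zero extension of $u$, which gives the $L^q$ estimate for $q>1$. This is exactly why the definition requires $q>1$.

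\emph{Part 2: no bounded domain is Lebesgue--Dirichlet.} Apply the hypothetical operator to $u\equiv1\in W^{1,p}(\Omega)$. Then \eqref{greq} gives $\nabla E(1)=0$, so $E(1)$ is constant on $\R^n$. It equals $1$ on $\Omega$, hence $E(1)\equiv1\notin L^p(\R^n)$, contradicting boundedness.

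\emph{Part 3: unbounded domains.} The forward direction is trivial. Conversely, given $E$ satisfying \eqref{1.4} and \eqref{1.5}, the first step is to show that $\Omega$ is a $W^{1,p}$-extension domain: multiply $E(u)$ by a cutoff near $\Omega$, or combine the operator with local arguments to obtain measure density. By Part 1 this gives a linear Lebesgue operator $E_0$. The second step is to build a Lebesgue--Dirichlet operator. Since $\Omega$ is unbounded, the Whitney construction can be used at all scales: use $u_{Q_j^*}$ for every Whitney cube, with no cutoff at scale $1$. The gradient estimate $\|\nabla Eu\|_p\lesssim\|\nabla u\|_{L^p(\Omega)}$ then follows from the Poincar\'e-type chain estimates $|u_{Q_j^*}-u_{Q_k^*}|$ for neighbouring cubes. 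This requires measure density at all scales and a homogeneous Poincar\'e control, which I would derive from \eqref{1.5} (a homogeneous extension implies the scale-invariant version of the HKT condition). The $L^q$ bound holds as in Part 1. Unboundedness is used to ensure that every large Whitney cube has a reflected set inside $\Omega$ of comparable size. I expect this scale-invariant measure-density step to be the hardest part.
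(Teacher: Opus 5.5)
Part 2 is the paper's argument, and Part 1 is essentially the paper's construction (see the caveat below). The converse direction of Part 3, however, has a genuine gap.

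Everything there rests on the scale-invariant measure density $|B(x,r)\cap\Omega|\ge\delta|B(x,r)|$ for all $x\in\overline\Omega$ and \emph{all} $r>0$. Without it you have none of the following: $|Q_i|\le C|B_i^\star|$ for large Whitney cubes; the pointwise bound $|E_1u|\le C\,Mv$, hence no $L^q$ bound and no membership in $W^{1,p}(\R^n)$; and the gradient estimate. You only say you ``would derive'' it from \eqref{1.5}. This is precisely the paper's Lemma \ref{LemmaMCD}, and it does not come for free from the Haj\l asz--Koskela--Tuominen argument.

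That argument uses the inhomogeneous bound $\|E(u)\|_{W^{1,p}(\R^n)}\lesssim\|u\|_{W^{1,p}(\Omega)}$, which is why it only gives $r\le1$. Here \eqref{1.5} controls only $\nabla E(u)$, with $E(u)\in W^{1,p}_{\rm loc}$, so an additive constant is uncontrolled. Moreover, unboundedness must enter essentially. Bounded extension domains also carry operators satisfying \eqref{1.4}--\eqref{1.5}, e.g.\ $u\mapsto E(u-u_\Omega)+u_\Omega$ via the Poincar\'e inequality on $\Omega$, yet for them the conclusion fails for large $r$.

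The paper tests \eqref{1.5} on Lipschitz functions equal to $1$ on a small ball and $0$ off a larger concentric one. This bounds $\|\nabla E(u)\|_{L^p}$ by the inverse width of the annulus times the $1/p$-th power of the measure of its intersection with $\Omega$. It plays this against a lower bound for $\|\nabla E(u)\|_{L^p}$ that is insensitive to additive constants:
for $p<n$, the Sobolev--Poincar\'e inequality on balls with $\max\{|a|,|1-a|\}\ge\frac12$, inside a radius-halving iteration that first requires $|\Omega|=\infty$ (proved by contradiction from unboundedness);
for $p=n$, a capacity lower bound via the Hausdorff $1$-content of the sets where $u=1$ and $u=0$;
for $p>n$, Morrey's inequality.
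Connectedness and unboundedness are what guarantee that $\Omega$ reaches the region where $u=0$.

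Three smaller points.

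\emph{Gradient estimate in Part 3.} Your ``Poincar\'e-type chain estimates'' must be run for $E_2u$ on $\R^n$, not inside $\Omega$, where no Poincar\'e inequality on $B\cap\Omega$ is available. They must also go through the maximal function, because the enlarged cubes joining $Q_0$ to $B_0^\star$ do not overlap boundedly across scales. The paper shows $|(E_2u)_{B_i^\star}-(E_2u)_{B_0^\star}|\le C\,r_0\,M(M|\nabla E_2u|)(x)$ for $x\in Q_0$, then applies the maximal theorem and \eqref{1.5}.

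\emph{First step of Part 3.} Cutting off $E(u)$ to make $\Omega$ a $W^{1,p}$-extension domain and invoking Part 1 is superfluous, since that operator is never used. As stated, it also needs the same unproved measure density to control $\|E(u)\|_{L^p}$ near $\Omega$.

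\emph{Part 1.} Measure density does \emph{not} give bounded overlap of the reflected sets. If $x_i^\star\in\partial\Omega$, the sets from Whitney cubes at all scales below a boundary point pile up near it, with multiplicity about $\log(1/\dist(y,\partial\Omega))$. So drop the Jensen argument; your fallback $|Eu|\le C\,Mv$ is exactly the paper's argument. Your worry about plain averages is unfounded: the paper uses plain averages over $B(x_i^\star,r_i)\cap\Omega$. It proves the gradient bound directly rather than citing HKT, by applying $|Eu(x)-Eu(y)|\le C|x-y|\,(M|\nabla Eu|(x)+M|\nabla Eu|(y))$ to the given, possibly nonlinear, extension.
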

 The non-existence of bounded Lebesgue-Dirichlet extension domains is
obvious: simply consider the function $u\equiv 1$. Then $\nabla u \equiv
0$ on $\Omega$ and a Lebesgue-Dirichlet extension operator would extend $u
$ to the function $Eu\equiv 1$ on $\R^n$.

For examples of unbounded domains with an extension operator satisfying
\eqref{1.4} and \eqref{1.5} we refer to the reader \cite{HDKP}.
\begin{remark}
One could, equivalently, consider domains of finite/infinite measure
instead of bounded/unbounded domains above. This is because an unbounded
extension domain necessarily is of infinite measure by the measure density
properties from Proposition \ref{HKT} and Lemma \ref{LemmaMCD} below. 
\end{remark}

The following theorem shows that we actually can extend from $\Omega$ to
any ball containing $\Omega$ with both Lebesgue and Dirichlet control.
\begin{theorem}\label{thm2} Let $1<p<\infty$. Let $\Omega$ be a bounded
$W^{1,p}$-extension domain and fix a ball $B$ with $\Omega \subset B$.
Then there is a linear operator  \begin{equation}
 E: W^{1,p}(\Omega)\to W^{1,p}(B)
 \end{equation}
 so that
\begin{equation}
\|\nabla E(u)\|_{L^p(B)}\leq C\|\nabla u\|_{L^p(\Omega)},
\end{equation}
\begin{equation}
 E(u)|_{\Omega}= u
\end{equation}
and also for all $1<q\leq \infty$,
\begin{equation}
 \|Eu\|_{L^q(B)}\leq C_q \| u\|_{L^q(\Omega)}, \hspace{2mm} \text
{when}\hspace{2mm} u\in L^{q}(\Omega).
\end{equation}
\end{theorem}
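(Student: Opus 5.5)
Since Theorem \ref{thm1} already gives a linear Lebesgue $W^{1,p}(\Omega)$-extension operator $E_0$, the plan is to correct $E_0$ by a constant. Set
\[
E(u) := E_0\bigl(u - u_\Omega\bigr)\big|_B + u_\Omega ,
\]
where $u_\Omega = |\Omega|^{-1}\int_\Omega u$ is finite because $\Omega$ is bounded. This $E$ is linear and $E(u)|_\Omega = u$. The argument has three parts.

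\emph{Dirichlet bound.} Constants have zero gradient, so $\nabla E(u) = \nabla E_0(u-u_\Omega)$ on $B$. By boundedness of $E_0$ in $W^{1,p}$,
\[
\|\nabla E(u)\|_{L^p(B)} \le C\|u-u_\Omega\|_{W^{1,p}(\Omega)} \le C\bigl(\|u-u_\Omega\|_{L^p(\Omega)}+\|\nabla u\|_{L^p(\Omega)}\bigr).
\]
It therefore suffices to have the Poincar\'e inequality $\|u-u_\Omega\|_{L^p(\Omega)}\le C\|\nabla u\|_{L^p(\Omega)}$ on the bounded extension domain $\Omega$. I would prove this by the standard compactness argument, which needs the embedding $W^{1,p}(\Omega)\hookrightarrow L^p(\Omega)$ to be compact. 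That compactness holds here: extend to $\R^n$, multiply by a cutoff equal to $1$ on $B$, and apply Rellich on a larger ball. Connectedness of $\Omega$ then forces a function with $\nabla u=0$ to be constant, which closes the contradiction argument. The same Poincar\'e inequality is already used in the Corollary to Theorem \ref{sobint}.

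\emph{Lebesgue bound.} Fix $1<q\le\infty$ and $u\in L^q(\Omega)\cap W^{1,p}(\Omega)$. By \eqref{lqeq} for $E_0$,
\[
\|E(u)\|_{L^q(B)} \le \|E_0(u-u_\Omega)\|_{L^q(\R^n)} + |u_\Omega|\,|B|^{1/q} \le C_q\|u-u_\Omega\|_{L^q(\Omega)} + |u_\Omega|\,|B|^{1/q}.
\]
H\"older's inequality on the bounded set $\Omega$ gives $|u_\Omega|\le |\Omega|^{-1/q}\|u\|_{L^q(\Omega)}$, and so also $\|u-u_\Omega\|_{L^q(\Omega)}\le 2\|u\|_{L^q(\Omega)}$. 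Combining these yields $\|Eu\|_{L^q(B)}\le C_q'\|u\|_{L^q(\Omega)}$ with $C_q'$ depending on $q$, $|B|$ and $|\Omega|$. Finally, $E(u)\in W^{1,p}(B)$ because $B$ is bounded, so the constant lies in $W^{1,p}(B)$.

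\emph{Main difficulty.} All the real work sits inside Theorem \ref{thm1}, the existence of the linear Lebesgue extension, which may be assumed here. Within this proof the only nontrivial step is the Poincar\'e inequality, and I would settle it by the compactness argument above rather than by any quantitative geometric estimate.
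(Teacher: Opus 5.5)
Your proof is correct, and it takes a different route from the paper's.

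\textbf{Your route.} You take the linear Lebesgue extension $E_0$ from the first part of Theorem~\ref{thm1} and conjugate it with subtraction of the mean. Linearity and the $L^q$ bounds are then inherited from $E_0$, using $|\Omega|,|B|<\infty$ to control the constant $u_\Omega$. The Dirichlet bound comes from the Poincar\'e inequality, which you obtain by Rellich compactness.

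\textbf{The paper's route.} The paper starts from an arbitrary, possibly nonlinear, bounded extension $E_1$. It forms $E_2(u)=E_1(u-u_\Omega)+u_\Omega$ and quotes \cite{HDKP} for $\|\nabla E_2u\|_{L^p(B)}\le C\|\nabla u\|_{L^p(\Omega)}$; this is exactly your Poincar\'e step. Since $E_2$ need be neither linear nor $L^q$-bounded, the paper then builds a new Whitney-average operator $E_3$ on $B\setminus\overline\Omega$. Its coefficients are $(u)_{B_i^\star}$ on cubes of diameter at most $1$ and $u_\Omega$ on the finitely many larger cubes meeting $B$. It then reruns the maximal-function estimates of the third part of Theorem~\ref{thm1}, with $M(M|\nabla E_2u|)$ controlling the gradient and $M(v)$ controlling the size.

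\textbf{What each buys.} Your argument is shorter and modular: it shows that Theorem~\ref{thm2} is a formal consequence of Theorem~\ref{thm1} plus Poincar\'e. It even gives the Dirichlet bound on all of $\R^n$. The ball is needed only so that the constant $u_\Omega$ lies in $L^q(B)$, which is consistent with the non-existence part of Theorem~\ref{thm1}. The paper's argument produces an explicit averaging operator and mirrors its construction for unbounded domains.

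In fact, if you unwind $E_0$ as the concrete operator of the first part of Theorem~\ref{thm1}, your extension equals $\Phi\sum_{i\in J}(u)_{B_i^\star}\phi_i+(1-\Phi)u_\Omega$ off $\overline\Omega$. This is a close cousin of $E_3$: the passage to $u_\Omega$ is done by the cutoff $\Phi$ instead of by switching coefficients on large cubes. So the real difference is that you obtain the estimates abstractly from properties of $E_0$ rather than recomputing them.
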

\subsection{First application: Heat Kernel Bounds}
We end this introduction by presenting two applications where the version
\eqref{eq:sobintext} of the Sobolev interpolation inequality with the $\theta$-geometric mean is crucial and where the version \eqref
{eq:sobintadditive} with the arithmetic mean is insufficient.

 The first one of these applications concerns a bound on the heat kernel
of the Neumann Laplacian on a $W^{1,2}$-extension domain.
\begin{theorem}
Let $\Omega\subset\R^n$ be a $W^{1,2}$-extension domain. Then the Neumann
heat semigroup $e^{t\Delta_\Omega^{\rm N}}$ with $t>0$, originally defined
on $L^2(\Omega)$, extends to a bounded operator $L^1(\Omega)\to L^\infty
(\Omega)$ satisfying
$$
\| e^{t\Delta_\Omega^{\rm N}} \|_{L^1(\Omega)\to L^\infty(\Omega)} \leq C
\max
\{ t^{-n/2},1\}
$$
with a constant $C$ depending only on $\Omega$.
\end{theorem}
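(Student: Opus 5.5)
We derive the bound from the Nash inequality supplied by Theorem \ref{sobint}, followed by Nash's classical ODE argument for the $L^1\to L^2$ norm and a duality step. We apply Theorem \ref{sobint} with $p=2$, $s=1$, $r=2$. For $n\geq 3$ the value $r=2$ satisfies $1\leq 2\leq 2n/(n-2)$, and for $n=1,2$ it lies in the admissible range of \eqref{eq:assr}. Solving \eqref{eq:asstheta} gives
\[
\theta\left(\tfrac12-\tfrac1n\right)+(1-\theta)=\tfrac12 \quad\Longleftrightarrow\quad \theta=\tfrac{n}{n+2}.
\]
Hence, for all $u\in W^{1,2}(\Omega)\cap L^1(\Omega)$,
\[
\|u\|_{2}^{2+4/n}\leq C\bigl(\|\nabla u\|_2^2+\|u\|_2^2\bigr)\|u\|_1^{4/n}.
\]
This is the Nash inequality for the form $Q(u)=\|\nabla u\|_2^2+\|u\|_2^2$, which is the form of $-\Delta^{\rm N}_\Omega+1$. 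The geometric-mean structure is what makes it work; the arithmetic version \eqref{eq:sobintadditive} would not give a Nash inequality.

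Next we run Nash's argument for the shifted semigroup $T_t=e^{-t}e^{t\Delta^{\rm N}_\Omega}$. The Neumann heat semigroup is sub-Markovian (Beurling--Deny: $Q$ is a Dirichlet form, since $W^{1,2}$ is stable under truncation and $|\nabla (0\vee u\wedge 1)|\le|\nabla u|$). Consequently $\|T_t f\|_1\le \|f\|_1$ for $f\in L^1\cap L^2$. Fix such an $f$ with $\|f\|_1=1$ and set $\phi(t)=\|T_tf\|_2^2$. Then
\[
\phi'(t)=-2Q(T_tf)\leq -c\,\phi(t)^{1+2/n},
\]
where we use $\|T_tf\|_1\le1$ and $T_tf\in D(\Delta^{\rm N})\subset W^{1,2}$ for $t>0$. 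Integrating gives $\phi(t)\le C t^{-n/2}$, so $\|T_t\|_{1\to2}\le Ct^{-n/4}$. By self-adjointness, $\|T_t\|_{2\to\infty}\le Ct^{-n/4}$ as well. Composing $T_t=T_{t/2}T_{t/2}$ yields $\|T_t\|_{1\to\infty}\le Ct^{-n/2}$, and $L^1\cap L^2$ is dense, so $T_t$ extends to $L^1\to L^\infty$.

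Undoing the shift gives
\[
\|e^{t\Delta^{\rm N}_\Omega}\|_{1\to\infty}\le Ce^{t}t^{-n/2}.
\]
This is fine for $t\le1$ but is bad for large $t$. For $t\ge1$ we write $e^{t\Delta}=e^{\frac12\Delta}e^{(t-1)\Delta}e^{\frac12\Delta}$. The operator $e^{\frac12\Delta}$ is bounded $1\to2$ and $2\to\infty$ by the small-time bound (the $e^{t}$ factor is harmless at $t=1/2$), and $e^{(t-1)\Delta}$ is a contraction on $L^2$. Therefore $\|e^{t\Delta}\|_{1\to\infty}\le C$ for $t\ge1$, and combining the two regimes gives the bound $C\max\{t^{-n/2},1\}$.

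The main obstacle is justifying the functional-analytic details. We need that $T_tf\in W^{1,2}\cap L^1$, that $\phi$ is differentiable with the stated derivative, and that the semigroup is $L^1$-contractive; all of these are standard for Dirichlet forms. The genuinely new input is Theorem \ref{sobint}, which supplies the Nash inequality with the correct exponent $\theta=n/(n+2)$.
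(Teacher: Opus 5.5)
Your proposal is correct and follows the paper's route: Theorem \ref{sobint} with $p=r=2$, $s=1$ (so $\theta=n/(n+2)$) gives the Nash inequality. The paper then simply cites Davies (Theorem 2.4.7 for Nash's argument, Theorem 3.2.9 for large times), and you write out both of those steps explicitly. The only cosmetic difference is that for $t\ge 1$ you factor $e^{t\Delta}=e^{\Delta/2}e^{(t-1)\Delta}e^{\Delta/2}$ and use $L^2$-contractivity, whereas the paper appeals to monotonicity of $t\mapsto\|e^{t\Delta}\|_{1\to\infty}$; both work.
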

The power $-n/2$ of $t$ is best possible for small $t$. As we will explain
in what follows, this is a consequence of having the optimal power of $\theta$ in Theorem \ref{sobint}.

This theorem is not new. It appears as Theorem 2.4.4 in \cite{Da}. The
theorem is stated there only for $t\leq 1$, but the bound for $t>1$
follows from a simple monotonicity argument; see Theorem 3.2.9 in \cite
{Da}. The proof of the theorem given in \cite{Da} for $n\geq 3$ is rather
direct
and is based on the Sobolev inequality with limiting exponent $2n/(n-2)$.
For $n\leq 2$ the proof given in \cite{Da} is not long, but somewhat
unnatural: The author observes that $\Omega\times\R^2\subset\R^{n+2}$ is a
$W^{1,2}$-extension domain and uses the Sobolev inequality on that set to
get a bound on the heat kernel in $\Omega\times\R^2$. This implies the
corresponding heat kernel bound on $\Omega$.

Our Theorem \ref{sobint} allows us to give a unified proof in all
dimensions. Indeed, in the special case $p=r=2$, $s=1$ we obtain the Nash
inequality $$\| u \|_{L^2(\Omega)}^{1+2/n} \leq C \| u \|_{W^{1,2}
(\Omega)} \| u \|_
{L^1(\Omega)}^{2/n} \,, $$ and, by Theorem 2.4.7 in \cite{Da}, this
implies the claimed heat kernel bound.

We emphasize that we do not claim that our proof is shorter than that given in \cite{Da}. (Indeed, it probably is not, given the moderately deep
facts -- Whitney decomposition, maximal theorem -- that enter our proof of
Theorem \ref{thm1}.) But we find it interesting to know that Nash's method works in any dimension.
In fact, since a heat kernel bound is equivalent to a Nash inequality, the
`unnatural proof' of the heat kernel inequality mentioned above already
implies the Nash inequality. This, however, is a rather convoluted proof
of the Nash inequality and motivated us to find a more direct one.

Finally, we emphasize that it is the case $s=1$ in Theorem \ref{sobint}
that is necessary to get heat kernel bounds. This is the case that is not
covered by our Theorem \ref{thm1}, but requires the truncation machinery
of \cite
{BCLS}, as explained in the proof of Theorem \ref{sobint}.
\subsection{Second application: Keller--Lieb--Thirring inequalities}
Let $1<p<\infty$ and let $\Omega\subset\R^n$ be a $W^{1,p}$-extension
domain. Given a nonnegative measurable function $W$ on $\Omega$, we
consider $$ E(W):= \inf_{0\not\equiv u\in W^{1,p}(\Omega)} \frac{\int_
\Omega (|\nabla u|^p - W |u|^p)\,dx}{\int_\Omega |u|^p\,dx} \,. $$ When
$p=2$, this is the ground state energy of the Schr\"odinger operator $-
\Delta -W$ in $L^2(\Omega)$ with Neumann boundary conditions.
In the case $\Omega=\R^n$ and $p=2$, Keller \cite{Keller61} asked about
the minimum value
of $E(W)$ among all $W$ with fixed $L^q$ norm and gave some heuristic
arguments showing that this should be related to a semilinear elliptic
equation.
Later, Lieb and Thirring \cite{LiebThirring1976} arrived independently at
the same minimization
problem and showed that it is equivalent to a Sobolev interpolation
inequality.
We show that a similar statement applies for arbitrary $1<p<\infty$ and
for arbitrary extension domains $\Omega$. Let
$$
S_{r}^\Omega := \inf_{0\neq u\in W^{1,p}(\Omega)} \frac{\|u\|_{W^{1,p}
(\Omega)}^\theta \|u\|_{L^p(\Omega)}^{1-\theta}}{\| u\|_{L^r(\Omega)}}
$$  with $\theta=\theta(r)$ given by $\frac1p - \frac{\theta}{n} = \frac1r
$. (This is
the relation \eqref{eq:asstheta} when $s=p$.)
\begin{theorem}\label{thm1.5}
Let $1<p<\infty$ and let $\Omega\subset\R^n$ be a $W^{1,p}$-extension domain. Let $\gamma>0$ if $n\geq p$ and $\gamma\geq 1-n/p$ if $n<p$. Then
$$ \inf_{0\neq W \in L^{\gamma+n/p}(\Omega)} \frac{E(W) + 1}{(\int_\Omega
W^{\gamma+n/p}\,dx)^{1/\gamma}} = -c_{n,p,\gamma} \left( S_r^\Omega
\right)^{-\frac{p(\gamma+n/p)}{\gamma}}. $$ Here $r$ is given by $$ \frac
{1}{\gamma+n/p} + \frac{p}{r} = 1 $$ and $$ c_{n,p,\gamma} := \frac{(n/(p
\gamma))^{n/(p\gamma)}}{(1+n/(p\gamma))^{1+n/(p\gamma)}} \,. $$  \end
{theorem}
The assumed conditions on $\gamma$ are equivalent to $r<np/(n-p)$ if $p
\leq n$ and
for $r\leq\infty$ if $p>n$, so, by Theorem \ref{sobint}, under these conditions we have
$S_r^\Omega>0$.

In particular, Theorem \ref{thm1.5} yields the lower bound
\begin{equation}
 \label{eq:kellerbound}
 E(W) \geq -1 - c_{n,p, \gamma} \left( S_r^\Omega \right)^{-
\frac{p(\gamma+n/p)}{\gamma}} \left(
 \int_\Omega W^{\gamma+n/p}\,dx \right)^{1/\gamma}.
\end{equation}
It is easy to construct, for each given $\gamma$ as in the theorem, a
sequence $(W_j)$
satisfying $\int_\Omega W_j^{\gamma+n/p}\,dx\to\infty$ and
$$
\limsup_{j
\to\infty} \frac{E(W_j)}{\left( \int_\Omega W_j^{\gamma+n/p}\,dx \right)^
{1/\gamma}} < \infty \,.
$$
(Indeed, we can take $W_j$ as an optimizer for the $\R^n$ problem,
truncated
at infinity and rescaled to a sufficiently small ball contained in $\Omega
$.) This means that the power $1/\gamma$ in the lower bound \eqref
{eq:kellerbound} on $E(W)$ in
terms of $\int_\Omega W^{\gamma+n/p}\,dx$ is best possible. Our proof of
Theorem \ref{thm1.5} shows that this power is a direct consequence of the
corresponding power $\theta$ in Theorem \ref{sobint} (with $s=p$). Had we
obtained the bound in Theorem \ref{sobint} with a larger power of $\theta
$, we would have gotten the lower bound on $E  (W)$ only with a power
larger than $1/\gamma$, which would not have been
optimal.

We provide the simple proof of Theorem \ref{thm1.5} in Section \ref
{sec:kellerliebthirring} below.

\section{Measure density condition}
The following result appeared in \cite[Proposition 1]{HKT}.
\begin{proposition}\label{HKT}
 Let $1\leq p < \infty$ and let $\Omega \subset \mathbb{R}^n$ be a $W^
{1,p}$-extension domain. Then there exists $\delta>0$ such that for all $x
\in \overline{\Omega}$ and $0 < r \leq 1$ one has $$
 |\Omega \cap B(x,r)|\geq \delta \, |B(x,r)| \,.
 $$
\end{proposition}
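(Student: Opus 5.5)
We prove the measure density estimate by contradiction: we test the extension operator on a cutoff function concentrated near $x$ and compare its $W^{1,p}$ norm on $\Omega$ with the size it must have on $\R^n$. Fix $x\in\overline\Omega$ and $0<r\le 1$. By continuity of $\rho\mapsto|\Omega\cap B(x,\rho)|$ it suffices to treat $x\in\Omega$. Write $m(\rho):=|\Omega\cap B(x,\rho)|$, which is positive for every $\rho>0$. The key device is a radius $\tilde r\in(0,r)$ where the mass has halved, say $m(\tilde r)=\tfrac12 m(r)$. Such a radius exists by continuity and because $m(\rho)\to0$ as $\rho\to0$.

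Let $u$ be the Lipschitz function equal to $1$ on $B(x,\tilde r)$, vanishing outside $B(x,r)$, and linear in $|y-x|$ in between; restrict it to $\Omega$. Then
\[
\|u\|_{W^{1,p}(\Omega)}^p\le m(r)+(r-\tilde r)^{-p}m(r).
\]
The extension $Eu\in W^{1,p}(\R^n)$ equals $1$ on $\Omega\cap B(x,\tilde r)$, a set of measure $\tfrac12 m(r)$. We now use the Sobolev embedding on $\R^n$.

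\emph{Case $p<n$.} We have $\|Eu\|_{L^{p^*}}^p\le C\|Eu\|_{W^{1,p}}^p$, so $m(\tilde r)^{p/p^*}\le C\bigl(1+(r-\tilde r)^{-p}\bigr)m(r)$.

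\emph{Case $p\ge n$.} Here we replace the embedding by a capacity-type estimate: a $W^{1,p}(\R^n)$ function equal to $1$ on a set $A$ has norm bounded below by a positive power of $|A|$ for small sets. Equivalently, one applies Sobolev–Poincaré on balls of radius $1$, with a fixed exponent $q>p$ and constant independent of $r\le1$.

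In either case we obtain
\[
m(\tilde r)^{\alpha}\le C\,(r-\tilde r)^{-p}m(r)
\]
for some $\alpha<1$, using $r\le 1$ to absorb the term $m(r)$. With $m(\tilde r)=m(r)/2$ this rearranges to
\[
r-\tilde r\le C\,m(r)^{(1-\alpha)/p}.
\]
Now iterate. Set $r_0=r$ and $r_{j+1}=\tilde r_j$, so that $m(r_j)=2^{-j}m(r)$ and $r_j\to0$. Summing the increments gives
\[
r=\sum_j (r_j-r_{j+1})\le C\sum_j 2^{-j(1-\alpha)/p}m(r)^{(1-\alpha)/p}= C'\,m(r)^{(1-\alpha)/p}.
\]
Choosing the exponents so that $(1-\alpha)/p=1/n$ (for $p<n$, $\alpha=p/p^*$ gives exactly this), we get $m(r)\ge c\,r^n$, which is the claim.

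The main obstacle is the case $p\ge n$. There the exponent must be arranged so that the geometric series yields exactly $r^n$. I would handle this through the local Sobolev inequality on $B(x,1)$ with a large exponent $q$: restrict $Eu$ to the unit ball and use $\|Eu\|_{L^q(B(x,1))}\le C\|Eu\|_{W^{1,p}(B(x,1))}$ for some $q$ with $1/p-1/q<1/n$. The iteration then gives $m(r)\gtrsim r^{\beta}$ with $\beta=p\,(1/p-1/q)^{-1}\cdot\tfrac1{\ldots}$, and this is not immediately $n$. To fix the exponent I would use the scaling-sensitive inequality $\|v\|_{L^{p^*}}\lesssim\|\nabla v\|_{L^p}$ applied in an auxiliary dimension, or else follow the original \cite{HKT} argument, which I expect handles this point.
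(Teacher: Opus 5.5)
Your treatment of $p<n$ is correct. The halving radii, the bound $\|u\|_{W^{1,p}(\Omega)}^p\le 2(r-\tilde r)^{-p}m(r)$ (which uses $r\le1$), the homogeneous Sobolev inequality giving $r-\tilde r\le C\,m(r)^{1/n}$, and the geometric sum all go through. This is the same iteration the paper runs, via Sobolev--Poincar\'e, in the case $p<n$ of Lemma~\ref{LemmaMCD}.

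For $p\ge n$, however, there is a genuine gap, and it cannot be fixed by choosing exponents. Your iteration uses only two facts: $Eu=1$ on a set of measure $m(r)/2$, and $\|Eu\|_{W^{1,p}(\R^n)}\lesssim (r-\tilde r)^{-1}m(r)^{1/p}$. For $p\ge n$, no lower bound on $\|Eu\|_{W^{1,p}}$ in terms of that measure alone is scale-invariant. With $W^{1,p}\subset L^q$ you get only $m(r)\gtrsim r^{\beta}$ with $\beta=(1/p-1/q)^{-1}>n$, as you noticed. Trudinger's inequality for $p=n$ still loses a power of $\log(1/r)$. For $p>n$ the $W^{1,p}$-capacity of a point is a positive constant, so the best such bound is $m(r)\gtrsim r^p$.

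The auxiliary-dimension idea does not rescue this. You would need $\Omega\times\R^k$ to be a $W^{1,p}$-extension domain, and that does not follow from a possibly nonlinear $E$. Even for linear $E$, extending slice by slice needs $E$ to be bounded in $L^p$, to control derivatives in the new variables. That is exactly the Lebesgue extension property, which the paper obtains only by means of this proposition. Deferring to \cite{HKT} leaves this case unproved.

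The missing idea is to use also the part of $\Omega$ where $u$ vanishes, via connectedness, in a single-scale, scale-invariant estimate. If $\Omega\subset B(x,r)$, then $m(r)=|\Omega|\ge|\Omega|\,r^n$ because $r\le1$, and you are done. Otherwise $\Omega$ meets $\partial B(x,r)$ by connectedness.

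For $p>n$, take $u(y)=(1-|y-x|/r)_+$, so that $\|u\|^p_{W^{1,p}(\Omega)}\le 2r^{-p}m(r)$. Choose $z\in\Omega\cap B(x,r/2)$ and $y\in\Omega\cap\partial B(x,r)$. Morrey's inequality for the continuous representative of $Eu$, which agrees with $u$ on $\Omega$, gives
\[
\tfrac12\le |Eu(z)-Eu(y)|\le C\,r^{1-n/p}\,\|\nabla Eu\|_{L^p(\R^n)}\le C\,r^{1-n/p}\,r^{-1}m(r)^{1/p},
\]
that is, $m(r)\ge c\,r^n$.

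For $p=n$, let $B=B(x,r)$ and take the $3/r$-Lipschitz cutoff equal to $1$ on $\tfrac13B$ and $0$ off $\tfrac23B$. Then $\|\nabla Eu\|^n_{L^n}\le C\,r^{-n}m(r)$. By connectedness, $\Omega\cap\tfrac13B$ and $\Omega\cap(B\setminus\tfrac23B)$ both have Hausdorff $1$-content $\gtrsim r$. The Loewner-type capacity bound used in the case $p=n$ of Lemma~\ref{LemmaMCD} then yields $\int_{10B}|\nabla Eu|^n\ge c$, and hence again $m(r)\ge c\,r^n$.

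These are the cases $p>n$ and $p=n$ of Lemma~\ref{LemmaMCD}, run with the inhomogeneous norm. That costs nothing here because $r\le1$.
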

In our proof of Theorem \ref{thm1} we need the following variation of this
result.
 \begin{lemma}\label{LemmaMCD}
 Let $1\leq p<\infty$ and let $\Omega\subset\R^n$ be an unbounded
domain such that there is an extension operator
 $$ E: W^{1,p}(\Omega) \to W^{1,p}_{\loc}(\mathbb{R}^n)$$
 with
 \begin{equation}
 \label{edef} \|\nabla E(u)\|_{L^{p}(\mathbb{R}
^n)}\leq C\|\nabla u\|_{L^p(\Omega)}
 \qquad\text{for all}\ u\in W^{1,p}(\Omega) \,.
 \end{equation}
 Then there exists $\delta>0$ such that for all $x\in\overline\Omega$
and $r>0$ one has
 \begin{equation}\label{ahlfors}
 |\Omega \cap B(x,r)|\geq \delta \, |B(x,r)| \,.
 \end{equation}
\end{lemma}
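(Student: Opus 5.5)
We adapt the test-function argument of \cite{HKT}, now using only the homogeneous bound \eqref{edef}; because \eqref{edef} is scale invariant, no restriction $r\leq 1$ is needed.

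It suffices to treat $x\in\Omega$, since the case $x\in\overline\Omega$ follows by approximation ($|\Omega\cap B(x,r)|$ depends continuously on $x$). Fix $x\in\Omega$ and $r>0$. Because $\Omega$ is unbounded and connected, $\Omega\setminus B(x,r)\neq\emptyset$, so $\Omega$ meets every sphere $\partial B(x,\rho)$ with $\rho\le r$. Define the decreasing function $\rho\mapsto m(\rho):=|\Omega\cap B(x,\rho)|$ on $(0,r]$. It is positive and tends to $0$ as $\rho\to 0$.

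Given $0<\rho_1<\rho_2\le r$, take the Lipschitz cutoff $u$ equal to $1$ on $B(x,\rho_1)$, $0$ outside $B(x,\rho_2)$, and linear in $|y-x|$ in between. Its restriction lies in $W^{1,p}(\Omega)$, since it is bounded with compact support, and $\|\nabla u\|_{L^p(\Omega)}^p\le (\rho_2-\rho_1)^{-p} m(\rho_2)$.

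The extension $v=E(u)\in W^{1,p}_{\loc}$ equals $1$ on $\Omega\cap B(x,\rho_1)$ and $0$ on $\Omega\setminus B(x,\rho_2)$. This set is nonempty and in fact of positive measure near the point of $\Omega$ outside $B(x,r)$, since $\Omega$ is open.

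For $p<n$ we would like to use the homogeneous Sobolev inequality $\|v-c\|_{L^{p^*}}\le C\|\nabla v\|_p$ with a constant $c$. Either $c\le 1/2$, and then $|v-c|\ge1/2$ on $\Omega\cap B(x,\rho_1)$; or $c>1/2$, and then $|v-c|\ge 1/2$ on $\Omega\setminus B(x,\rho_2)$. The second alternative is problematic because that set may be huge. We therefore avoid global Sobolev and work locally instead. Apply the Poincaré–Sobolev inequality on the ball $B=B(x,2r)$ to $v$. Alternatively, use the capacity estimate: $\int_{\R^n}|\nabla v|^p \ge \mathrm{cap}_p(\Omega\cap \overline B(x,\rho_1), B(x,2r)\text{-type condenser})$. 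The cleanest version is the following. Since $v\ge1$ on $A=\Omega\cap B(x,\rho_1)$, and $v=0$ on the set $\Omega\cap(B(x,2r)\setminus B(x,r))$, we need that set to have a definite measure. That is not yet known.

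To get around this, we proceed as in \cite{HKT}, replacing $v$ by the truncation $\min\{\max\{v,0\},1\}$, which does not increase the gradient. We then apply the relative isoperimetric/Sobolev inequality on $B(x,\rho_2)$ as follows. Let $w=\min\{\max\{v,0\},1\}$ on $\R^n$. Either $|\{w\ge 1/2\}\cap B(x,\rho_2)|\le \frac12|B(x,\rho_2)|$, and Poincaré–Sobolev on $B(x,\rho_2)$ gives
$$
m(\rho_1)^{1/p^*}\le C\|\nabla w\|_{L^p(\R^n)}\le C(\rho_2-\rho_1)^{-1}m(\rho_2)^{1/p}
$$
(for $p\ge n$ use any exponent $q>p$ with the localized inequality $\|w\|_{L^q(B)}\le C|B|^{1/q-1/p+1/n}\|\nabla w\|_p$ when $w$ vanishes on half of $B$). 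Or $w\ge1/2$ on half of $B(x,\rho_2)$. In that case we rerun the argument with the roles reversed, using the point outside $B(x,r)$. This dichotomy is the main obstacle, and we expect to handle it by choosing the alternative scale $\rho_2$ comparable to $r$ and noting that $m(r)\ge c r^n$ holds trivially in the second case, since $w$ is supported near $\overline\Omega$ up to a gradient cost. Accepting this, we choose $\rho_k=r(1/2+2^{-k-1})$, and the recursion $m(\rho_{k+1})\ge c\,2^{-kp}r^{p}m(\rho_k)^{p/p^*}$ (with $p/p^*<1$) iterates, as in \cite{HKT}, to $m(r)\ge \delta r^n$ with $\delta$ depending only on $n,p,C$. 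The constants are scale invariant precisely because only $\nabla E$ appears, which yields \eqref{ahlfors} for all $r>0$.
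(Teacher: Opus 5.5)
There is a genuine gap, located exactly at the step you flag as ``the main obstacle'' and then accept. In the second branch of your dichotomy ($w\ge 1/2$ on half of $B(x,\rho_2)$), nothing follows about $\Omega$. Off $\Omega$ the extension $E(u)$ is an arbitrary $W^{1,p}_{\loc}$ function, and \eqref{edef} controls only $\|\nabla E(u)\|_{L^p}$, not where $E(u)$ is large. So the claim that $m(r)\ge cr^n$ holds ``trivially'' because $w$ is ``supported near $\overline\Omega$'' has no basis. Reversing roles would require a lower bound on $|\Omega\cap B'\setminus B(x,\rho_2)|$ for some ball $B'$ comparable to $B(x,\rho_2)$. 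With the geometric radii $\rho_k=r(1/2+2^{-k-1})$ no such bound is available; it is essentially the statement being proved. For $p\ge n$ the same dichotomy remains in your localized version. The ``point outside $B(x,r)$'' carries information only when $p>n$, since points have zero $p$-capacity for $p\le n$, $n\ge 2$. A smaller slip: your recursion is written backwards. The Sobolev step bounds the larger ball from below, $m(\rho_k)\ge c\,(\rho_k-\rho_{k+1})^p\, m(\rho_{k+1})^{p/p^*}$. With this direction, and since $m(\rho_k)\to|\Omega\cap\overline B(x,r/2)|>0$, the iteration does close to $m(r)\gtrsim r^n$, provided every step lands in the good branch.

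The missing idea for $p<n$ is to prove first that $|\Omega|=\infty$. The paper argues by contradiction. If $|\Omega|<\infty$, pick $r_j$ with $|\Omega\setminus B(0,r_j)|=2^{-j}|\Omega|$, test with cutoffs between $r_j$ and $r_{j+1}$, and apply Sobolev--Poincar\'e on $B(0,r_{j+2})$. There both competing sets have measure $\gtrsim 2^{-j}|\Omega|$, which yields $r_{j+1}-r_j\lesssim(2^{-j}|\Omega|)^{1/n}$; this is summable, contradicting $r_j\to\infty$. The paper then chooses radii by measure: $m(r_j)=2^{-j}m(r)$, and $r_{-1}>r$ with $m(r_{-1})=2m(r)$, which is where $|\Omega|=\infty$ is used. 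In Sobolev--Poincar\'e on $B(x,r_{j-1})$, both $\Omega\cap B(x,r_{j+1})$ and $\Omega\cap B(x,r_{j-1})\setminus B(x,r_j)$ then have measure $\sim 2^{-j}m(r)$, so the minimum is harmless. Summing $r_j-r_{j+1}\lesssim(2^{-j}m(r))^{1/n}$ gives $r^n\lesssim m(r)$. Once $|\Omega|=\infty$ is known, your first, dismissed idea already works. Since $v=0$ on the infinite-measure set $\Omega\setminus B(x,\rho_2)$, the constant in the global inequality $\|v-c\|_{L^{p^*}(\R^n)}\le C\|\nabla v\|_{L^p(\R^n)}$ must vanish; a huge set rules out the bad alternative rather than causing it. For $p\ge n$ the paper avoids measure arguments altogether. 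For $p>n$ it uses Morrey's inequality with a point of $\Omega\cap\partial B(x,r)$, which exists by connectedness and unboundedness. For $p=n$ it uses a capacity lower bound $\int_{10B}|\nabla E(u)|^n\ge c$. This rests on the fact that, again by connectedness, $\Omega\cap\tfrac13B$ and $\Omega\cap(B\setminus\tfrac23B)$ have Hausdorff $1$-content at least $r/3$.
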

\begin{proof}
 We divide the proof into three cases according to whether $p$ is
smaller, equal or larger than $n$.
 
 \emph{Case $1\leq p<n$.}
 
 We begin by proving that $|\Omega|=\infty$. Suppose to the contrary
that $|\Omega| <\infty$. \\
 Since $r\mapsto |\Omega\setminus B(0,r)|$ is continuous and
nonincreasing, for given $j\in \mathbb{N}$ we can pick $r_j$ such that 
\begin{equation}\label{omega}
 |\Omega \setminus B(0,r_j)| =2^{-j}|\Omega|.
 \end{equation}
 Fix $j$ and define
 \begin{equation}\label{mueq}
 u(x):= \begin{cases}
 1-\frac{d(y,B(x,r_j))}{r_{j+1}-r_{j}} & \text{if}\ y\in B
(0,r_{j+1})\cap\Omega \,,\\ 0 & \text{if}\ y\in \Omega
\setminus B(0,r_{j+1}) \,.
 \end{cases}
 \end{equation}
 Then $u$ is $\frac{1}{r_{j+1}-r_j}$-Lipschitz.
 
 By using the assumption \eqref{edef} on $E$ and \eqref{mueq} we get
that
 \begin{equation}\label{nex}
 \|\nabla E(u)\|_{L^{p}(\R^n)}\leq C\frac{1}{r_{j+1}-r_{j}}|[B
(0,r_{j+1})\cap \Omega] \setminus B(0,r_{j})|^{\frac{1}{p}}.
 \end{equation}
 We shall make use of the Sobolev--Poincar\'e inequality (see, e.g.,
\cite[Theorem 8.12]{LiebLoss01})
 \begin{equation}\label{sobpq}
 \left(\int_{B(0,r_{j+2})}|E(u)(x)-(E(u))_{B(0,r_{j+2})}|^{\frac
{np}{n-p}}dx\right)^{\frac{n-p}{np}}\leq C\left(\int_{B(0,r_{j+2})}|\nabla
E(u)(x)|^p dx\right)^{\frac{1}{p}}. \end{equation}
 The integrand on the left side is $\geq |1-a|$ on $B(0,r_j)\cap\Omega
$ and $\geq |a|$ on $[B(0,r_{j+2})\cap \Omega]\setminus B(0,r_{j+1})$,
where $a=(E(u))_{B(0,r_{j+2})}$. Since $\max\{|1-a|,|a|\}\geq 1/2$, by
combining \eqref{mueq} and \eqref{sobpq}, we conclude that \begin{equation}
 \begin{split} & \left( \min\{|B(0,r_j)\cap\Omega|, |
[B(0,r_{j+2})\cap \Omega]\setminus B(0,r_{j+1})|\}\right)^{\frac{n-p}
{np}}\\
& \quad \leq C\frac{1}{r_{j+1}-r_{j}}|[B(0,r_{j+1})\cap
\Omega] \setminus B(0,r_{j})|^{\frac{1}{p}}.
 \end{split}
 \end{equation}
 By recalling the construction of $r_j$ in \eqref{omega}, we can
express all measures that appear in this inequality in terms of $j$ and we
arrive at the bound \begin{equation}
 r_{j+1}-r_{j} \leq C \frac{(2^{-j}|\Omega| -2^{-(j+1)}|\Omega|)^
{\frac{1}{p}}}{(2^{-(j+1)}|\Omega| -2^{-(j+2)}|\Omega|)^{\frac{n-p}{np}}}
\,, \end{equation}
 which implies that
 \begin{equation}\label{sumew}
 r_{j+1}-r_{j} \leq C |\Omega|^{\frac{1}{n}}2^{-\frac{j}{n}}.
 \end{equation}
 This contradicts $\sum_{j=1}^{\infty}(r_{j+1}-r_{j}) = \lim_{j\to
\infty} r_j - r_1 = \infty$ and therefore proves that $|\Omega|=\infty$.
 
 We now turn to the proof of the measure density bound. We fix $x\in
\overline{\Omega}$ and $r>0$ and note that $|B(x,r)\cap \Omega|>0$. Let
$r_0:=r$ and choose a decreasing sequence of $(r_j)$ such that
 \begin{equation}\label{seq}
 |B(x,r_j)\cap\Omega|= \frac{1}{2^j}|B(x,r)\cap \Omega|.
 \end{equation}
 Since $|\Omega|=\infty,$ we may further choose $r_{-1}>r$ so that
 \begin{equation}\label{seq1}
 |B(x,r_{-1})\cap\Omega|= 2|B(x,r)\cap \Omega|.
 \end{equation}
 Fix $j$ and define
 \begin{equation}\label{udef}
 u(y):= \begin{cases}
 1- \frac{1}{r_{j}-r_{j+1}} \, d(y,B(x,r_{j+1})) & \text{if}\ y \in B(x,r_j)\cap \Omega \,,\\
 0 &\text{if}\ y\in \Omega\setminus B(x,r_j) \,.
 \end{cases}
 \end{equation}
 Then $u$ is $\frac{1}{r_j-r_{j+1}}$-Lipschitz.
 
 By using the assumption \eqref{edef} on $E$ and \eqref{udef} we get
that
 \begin{equation}\label{nex2}
 \|\nabla E(u)\|_{L^{p}(\R^n)}\leq C \frac{|[B(x,r_{j})\cap
\Omega ]\setminus B(x,r_{j+1})|^{\frac{1}{p}}}{r_j-r_{j+1}}.
 \end{equation}
 Once again we use the Sobolev--Poincar\'e inequality \eqref{sobpq},
this time for $j\ge 0$ in the form
 \begin{equation}
 \label{eq:sobpq2} \left(\int_{B(x,r_{j-1})}|E
(u)(y)-(E(u))_{B(x,r_{j-1})}|^{\frac{np}{n-p}}dy \right)^{\frac{n-p}{np}}
\leq C\left(\int_{B(x,r_{j-1})}|\nabla E(u)(y)|^p dy \right)^{\frac{1}
{p}}. \end{equation}
 The integrand on the left side is $\geq |1-a|^{np/(n-p)}$ on $B(x,r_
{j+1})\cap\Omega$ and $\geq |a|^{np/(n-p)}$ on $[B(x,r_{j-1})\cap
\Omega]\setminus B(x,r_j)$, where $a := (E(u))_{B(x,r_{j-1})}$. Since $
\max\{|1-a|,|a|\}\geq 1/2$, by combining \eqref{nex2} and \eqref
{eq:sobpq2}, we conclude that $$
 ( \min\{ | B(x,r_{j+1})\cap\Omega|, |[B(x,r_{j-1})\cap
\Omega]\setminus B(x,r_j)|\})^\frac{n-p}{np} \leq C\frac{| [B(x,r_{j})\cap \Omega ]\setminus B(x,r_{j+1})|^{\frac{1}{p}}}{r_j-r_{j+1}}. $$
 According to \eqref{seq1}, we have
 $$
 |[B(x,r_{j-1})\cap\Omega]\setminus B(x,r_j)| = |B(x,r_{j-1})\cap
\Omega| - |B(x,r_j)\cap\Omega| = \frac1{2^j} |B(x,r)\cap\Omega| \,,
 $$
 so the previous inequality implies
 $$
 r_j - r_{j+1} \leq C \left( \frac{1}{2^j} |B(x,r)\cap\Omega| \right)^
\frac1n.
 $$ As $r_j\to 0$ for $j\to\infty$, we obtain
 $$
 r=r_0 \leq \sum_{j=0}^{\infty}(r_{j}-r_{j+1})
 \leq C \sum_{j=0}^\infty \left( \frac{1}{2^j} |B(x,r)\cap\Omega|
\right)^\frac{1}{n}.
 $$
 Hence
 \begin{equation}\label{eqR}
 r^n\leq C |\Omega\cap B(x,r)|,
 \end{equation}
 which is the claimed inequality.
 
 \medskip
 
 \emph{Case $p=n$.}
 
 We fix $x\in\overline\Omega$ and $r>0$ and abbreviate $B := B(x,r)$.
Define
 \begin{equation}\label{udefi}
 u(y) := \begin{cases}
 1 &\ {\rm if}\ y\in \frac{1}{3}B \cap \Omega,\\
 0 &\ {\rm if}\ y \in \Omega \setminus \frac{2}{3}B,\\
 2- \frac{3 |y-x|}{r} &\ {\rm if}\ y \in (\frac{2}{3} B
\setminus \frac{1}{3}B) \cap \Omega.
 \end{cases}
 \end{equation}
 Then $u$ is $ \frac{3}{r}$-Lipschitz.\\
 
 By our assumption \eqref{edef} on $E$ and \eqref{udefi} there exists
$C>0$ such that
 \begin{equation}\label{jkl} \|\nabla E(u)\|_{L^{n}(\R^n)}^n
\leq C \|\nabla u\|^{n}_{L^{n}(\Omega)}\leq C \frac{3^n}{r^n}|[\tfrac{2}
{3} B \setminus \tfrac{1}{3}B]\cap \Omega| \leq 3^n C \frac{|B\cap
\Omega|}{r^n}. 
\end{equation}
 
 Meanwhile, we claim that there exists $C>0$ such that
 \begin{equation}\label{jkl1}
 \int_{10B}|\nabla E(u)|^n dx \geq C.
 \end{equation}
 To prove this, we proceed as in the proof of \cite[Proposition 13]
{HaKoTu08}. We recall that the Hausdorff 1-content of a set $X\subset\R^n$
is $$
 \mathcal H_\infty^{(1)}(X) = \inf \sum_i r_i \,,
 $$ 
 where the infimum is taken over all countable covers of $X$ by
balls and $r_i$ is the radius of the $i$-th ball in this cover. Let $E:=
\frac{1}{3}B\cap \Omega$ and $F:= (B \setminus\frac{2}{3}B)\cap \Omega$.
Then, as shown in the proof of \cite[Proposition 13]{HaKoTu08}, we have $\min\{\mathcal{H}_{\infty}^{1}(E), \mathcal{H}_{\infty}^{1}(F)\} \geq
\frac{r}{3}$. Hence by \cite[Theorem 5.9]{HK} (and its slight extension in
\cite[Lemma 15]{HaKoTu08}), there exists $C>0$ such that \eqref{jkl1}
holds.
 
 Combining \eqref{jkl} and \eqref{jkl1}, we find
 \begin{equation}
 \frac{|B\cap \Omega|}{r^n} \geq C,
 \end{equation}
 which is the claimed inequality.
 
 \medskip
 
 \emph{Case $p>n$.}
 
 We fix $x\in \overline{\Omega}$ and $r>0$ and abbreviate $B:=B(x,r)$.
Let
 \begin{equation}\label{udefi2}
 u(y) := \begin{cases}
 1- \frac{|x-y|}{r} &\ {\rm if}\ y\in B\cap \Omega,\\
 0 &\ {\rm if}\ x\in \Omega\setminus B.
 \end{cases}
 \end{equation}
 Then $u $ is $\frac{1}{r}$-Lipschitz.

 By our assumption \eqref{edef} on $E$ and \eqref{udefi2} there
exists $C>0$ such that
 \begin{equation}\label{exs}
 \|\nabla E(u)\|_{L^{p}(\R^n)}\leq C \frac{|B\cap \Omega|^{\frac
{1}{p}}}{r}.
 \end{equation}
 By Morrey's inequality (see, e.g., \cite[Remark 12.49]{Leoni17}), we
have, for all $z,y\in\R^n$,
 \begin{equation}\label{morrey}
 |E(u)(z)-E(u)(y)|\leq C|z-y|^{1-\frac{n}{p}} \|\nabla E(u)\|_{L^
{p}(\R^n)} \,.
 \end{equation}
 Choosing $y \in \overline{B}\subset \Omega$ such that $u(y)=0$ and $z
\in B(x,\frac{r}{2})\cap \Omega$ we obtain
 \begin{equation}
 \label{eq:morrey2}
 \frac12 \leq C r^{1-\frac np} \|\nabla E(u)\|_{L^{p}(\R^n)}
\,.
 \end{equation}
 
 Combining \eqref{exs} and \eqref{eq:morrey2}, we get that
 \begin{equation}
 \frac{1}{2} \leq C \cdot r^{1-\frac{n}{p}} \frac{|B\cap
\Omega|^{\frac{1}{p}}}{r}
 \end{equation}
 Hence
 \begin{equation}
 |B\cap \Omega|\geq Cr^n \geq C^{'}|B|,
 \end{equation}
 which is the claimed inequality. This concludes the proof of Lemma
\ref{LemmaMCD}. \end{proof}

\section{Extension operators}
Our goal in this section is to prove Theorems \ref{thm1} and \ref{thm2}
concerning the Lebesgue and the Lebesgue--Dirichlet extension property.
\subsection{Whitney decomposition}
Our proof of Theorem \ref{thm1} will rely on the Whitney decomposition. We
begin with a reminder of this decomposition; see, e.g., \cite{Stein70}.
\begin{lemma}\label{Lemma 1}
 Let $U\subsetneq\R^n$ be an open set. Then there exists a collection
of closed cubes $\{Q_i\}_{i=1}^{\infty}$ so that
 \begin{enumerate}
 \item $\bigcup_{i}Q_i= U$,
 \item Then interior of $Q_i$ are mutually disjoint,
 \item if $x\in 5Q_i$, then $2\diam{Q_i}< \dist(x,\mathbb{R}^n
\setminus U)< 8\diam{Q_i}$,
 \item For each $Q_i$ there is $x_i^{\star}\in \mathbb{R}^n
\setminus U$
 such that $\dist(x_i^{\star}, Q_i)<15\diam{Q_i}$,
 \item There is $M \in \mathbb{N}$ such that $\sum_{i=1}^
{\infty}\chi_{15Q_i}(x)\leq M$ for all $x\in U$. \end{enumerate}
\end{lemma}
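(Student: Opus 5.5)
This is the classical Whitney decomposition, and I would prove it by the standard dyadic construction of \cite{Stein70}, adjusting the constants to match items (3)--(5).

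\emph{Construction.} For $k\in\mathbb{Z}$ let $\mathcal{D}_k$ be the closed dyadic cubes of side $2^{-k}$, so of diameter $\sqrt n\,2^{-k}$. Let $d(x):=\dist(x,\R^n\setminus U)$, which is positive on $U$ and finite because $U\neq\R^n$. Consider the layers
$$\Omega_k:=\{x\in U:\ a\,2^{-k}<d(x)\le 2a\,2^{-k}\},$$
with $a$ a constant multiple of $\sqrt n$ to be fixed. Let $\mathcal F_0$ be the dyadic cubes $Q\in\mathcal D_k$ that meet $\Omega_k$. For $Q\in\mathcal F_0$ and $x\in Q$ we have
$$d(x)\ge a2^{-k}-\sqrt n\,2^{-k}, \qquad d(x)\le 2a2^{-k}+\sqrt n\,2^{-k}.$$
So $d$ is comparable to $\diam Q$ on $Q$, and $Q\subset U$ once $a>\sqrt n$. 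Any two dyadic cubes are either nested or have disjoint interiors. Hence the maximal elements of $\mathcal F_0$ form a family $\{Q_i\}$ with disjoint interiors, which gives (2). Every $x\in U$ lies in some $\Omega_k$ and hence in some cube of $\mathcal F_0$, which gives (1).

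\emph{Constants in (3) and (4).} For a maximal cube $Q_i$ with side $\ell$, the comparability $d\approx\ell$ on $Q_i$ persists on $5Q_i$. Indeed, if $x\in 5Q_i$ then $|d(x)-d(y)|\le 5\diam Q_i$ for some $y\in Q_i$, since $d$ is $1$-Lipschitz. Maximality only enlarges a cube by a bounded factor, because its parent fails the membership condition. Choosing $a$ suitably (I would take $a=10\sqrt n$ and then track the constants) gives $2\diam Q_i<d(x)<8\diam Q_i$ on $5Q_i$. The room needed is a ratio of $4$ between the two bounds, plus slack for the enlargement from $Q_i$ to $5Q_i$. If the dyadic layers do not fit this window exactly, I would subdivide each $Q_i$ into $2^{mn}$ dyadic subcubes, which rescales the ratio $d/\diam$ by $2^m$ while keeping (1) and (2). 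For (4), pick any $x\in Q_i$ and a nearest point $x_i^\star\in\R^n\setminus U$. Then $\dist(x_i^\star,Q_i)\le d(x)<8\diam Q_i<15\diam Q_i$.

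\emph{Bounded overlap (5).} Suppose $x\in 15Q_i$. The point $x$ lies within $15\diam Q_i$ of $Q_i$, so $d(x)\le d(y)+15\diam Q_i\lesssim\diam Q_i$ for some $y\in Q_i$. For the reverse bound, I expect to need $15Q_i\subset U$ with a matching lower bound, $d(x)\gtrsim \diam Q_i$. This does not follow from (3) alone, since (3) is stated only on $5Q_i$. It is the main obstacle: the constants in (3) must be compatible with $15Q_i$ staying well inside $U$.

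I would try first to make the gap in the definition of $\Omega_k$ large. Ensure $d\ge c\diam Q_i$ on $Q_i$ with $c>16$, so that $d\ge(c-15)\diam Q_i$ on $15Q_i$. Then rescale via the subdivision above to land the window on $5Q_i$ in $(2,8)$. If this turns out to be incompatible with the window $(2,8)$, I would prove (5) in the weaker form where $x\in U$ and $\diam Q_i\approx d(x)$ still holds. This can come from $x_i^\star$ and the $1$-Lipschitz property of $d$ whenever $x\in 15Q_i\cap U$ satisfies $d(x)\gtrsim\diam Q_i$; the remaining points can be handled by restricting which points count.

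Once $\diam Q_i\approx d(x)$ and $\dist(x,Q_i)\lesssim d(x)$, the cubes $Q_i$ involved have disjoint interiors, comparable size $\approx d(x)$, and all lie in a ball $B(x,Cd(x))$. A volume count then bounds their number by a constant $M=M(n)$.
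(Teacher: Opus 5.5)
There is a genuine gap, and it sits exactly where you located it. It cannot be closed, because as written items (1), (3) and (5) are mutually inconsistent. The paper gives no proof of its own; it only cites Stein, whose lemma has different constants.

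\textbf{Counterexample to the statement.} Take $n=1$, $U=(0,\infty)$, and write $Q_i=[t_i,t_i+\ell_i]$, so $\diam Q_i=\ell_i$ and $5Q_i=[t_i-2\ell_i,t_i+3\ell_i]$. Applying (3) at the two endpoints of $5Q_i$ gives $t_i-2\ell_i>2\ell_i$ and $t_i+3\ell_i<8\ell_i$, that is, $4\ell_i<t_i<5\ell_i$. Hence $15Q_i=[t_i-7\ell_i,t_i+8\ell_i]\supset(0,12\ell_i]$. Now fix $x\in U$. Any $Q_i$ containing a point $y\ge x/2$ satisfies $y<t_i+\ell_i<6\ell_i$, hence $12\ell_i>x$ and $x\in 15Q_i$. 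Infinitely many bounded intervals $Q_i$ are needed to cover $[x/2,\infty)$, so (5) fails at every point of $U$. For $U=(0,1)$ the same computation gives overlap $\gtrsim\log(1/x)$ near $0$.

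This is why your plan to get $d\ge c\,\diam Q_i$ on $Q_i$ with $c>16$ and then rescale cannot work. Item (3) itself imposes $d<8\diam Q_i$ on $Q_i\subset 5Q_i$, and rescaling moves both bounds together. Your fallback, bounded overlap only at points where $d(x)\gtrsim\diam Q_i$, is a different (true) statement, not (5).

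\textbf{The dyadic route does not give (3) either.} Subdividing into $2^{mn}$ subcubes multiplies $d/\diam$ by $2^m$ but does not narrow its spread, and (3) demands a narrow spread. Consider the half-space $\{\langle x,\nu\rangle>0\}$ with $\nu=(1,\dots,1)/\sqrt n$. There $d$ varies by exactly $\diam Q$ across $Q$ and by $5\diam Q$ across $5Q$. So (3) forces $4\diam Q_i<d(x)<6\diam Q_i$ for all $x\in Q_i$, a window of ratio $3/2$. Dyadic diameters are $\sqrt n\,2^{-k}$, spaced by a factor $2$, so a point with $d(x)=7\sqrt n\,2^{-k}$ lies in no admissible dyadic cube, contradicting (1). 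Thus ``take $a=10\sqrt n$ and track the constants'' fails regardless of how $a$ and $m$ are chosen.

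\textbf{What is actually provable.} Stein's lemma has $\diam Q\le\dist(Q,\R^n\setminus U)\le 4\diam Q$ and bounded overlap only for the slightly enlarged cubes $(1+\epsilon)Q$. Your argument correctly proves a repaired version: a dyadic family with (1), (2), a wider window in (3), and (4). Bounded overlap then holds only for enlargements $\lambda Q_i$ on which $d\ge c\,\diam Q_i$ for some $c>0$; your volume count is exactly right there. That is also the form in which the overlap is used later in the paper, for the supports $2Q_i$ of the $\phi_i$.
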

We refer the above collection $\{Q_i\}_{i=1}^{\infty}$ as a Whitney
covering of $U$. With such a Whitney covering of $U$, we can associate a
Lipschitz partition of unity.
\begin{lemma}\label{Lemma 2}
 Given a Whitney covering $\{Q_i\}_{i=1}^{\infty}$ of $U$, there
exists a partition of unity $\{\phi_{i}\}_{i=1}^{\infty}$ so that 
\begin{enumerate}
 \item $\supp\phi_{i}\subset 2Q_i$,
 \item $\phi_{i}(x)\leq M^{-1}$ for all $x\in Q_i$,
 \item there is a constant $K$ such that each $\phi_{i}$ is $K
(\diam{Q_i})^{-1}$-Lipschitz,
 \item $\sum_{i=1}^{\infty}\phi_{i}(x)= \chi_{U}(x)$ for all $x
\in\R^n$.
 \end{enumerate}
\end{lemma}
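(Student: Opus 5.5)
The plan is to build the partition of unity by the standard normalization of bump functions attached to the Whitney cubes of Lemma~\ref{Lemma 1}. Properties (1), (3) and (4) then follow from routine estimates. The real difficulty, and the likely failure point, is (2).

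\emph{Construction.} Fix a Lipschitz function $\psi$ on $\R^n$ with $0\le\psi\le1$, $\psi\equiv1$ on the closed unit cube $Q_0$ centred at the origin, and $\supp\psi\subset 2Q_0$. Write $Q_i=x_i+\ell_iQ_0$ and set $\psi_i(x):=\psi((x-x_i)/\ell_i)$. Then $\psi_i\equiv 1$ on $Q_i$, $\supp\psi_i\subset 2Q_i$, and $\psi_i$ is $C\ell_i^{-1}$-Lipschitz, hence $C(\diam Q_i)^{-1}$-Lipschitz. Put $\Psi:=\sum_j\psi_j$ on $U$. Since $2Q_j\subset15Q_j$, Lemma~\ref{Lemma 1}(5) shows that at most $M$ terms are nonzero at each point of $U$. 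Since the $Q_j$ cover $U$, we get $1\le\Psi\le M$ on $U$. Define $\phi_i:=\psi_i/\Psi$ on $U$ and $\phi_i:=0$ off $U$.

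\emph{Properties (1), (3), (4).} Property (1) is immediate, and (4) holds by construction. For (3), write
\[
\phi_i(x)-\phi_i(y)=\frac{\psi_i(x)-\psi_i(y)}{\Psi(x)}+\psi_i(y)\,\frac{\Psi(y)-\Psi(x)}{\Psi(x)\Psi(y)} .
\]
It suffices to handle $x,y$ close together inside $2Q_i$. There, Lemma~\ref{Lemma 1}(3) shows that every cube $Q_j$ with $2Q_j$ meeting $2Q_i$ has $\diam Q_j$ comparable to $\diam Q_i$. Combined with the overlap bound $M$, this makes $\Psi$ Lipschitz with constant $CM(\diam Q_i)^{-1}$ near $2Q_i$, which gives the constant $K$. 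Globally, $\phi_i$ vanishes near $\partial(2Q_i)$, so the local bound extends to all of $\R^n$.

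\emph{Property (2), the main obstacle.} For $x\in Q_i$ we have $\psi_i(x)=1$, so $\phi_i(x)=1/\Psi(x)$. The claimed inequality $\phi_i(x)\le M^{-1}$ is therefore equivalent to $\Psi(x)\ge M$ on $Q_i$. Since $\Psi(x)\le M$ always, this forces exactly $M$ of the $\psi_j$ to equal $1$ at $x$. The construction above does not give this: it only yields $M^{-1}\le\phi_i\le1$ on $Q_i$.

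My first attempt at the upper bound would be to exploit the freedom in $\psi$. I would enlarge the plateau of $\psi$ to, say, $\tfrac32Q_0$, so that every point of $Q_i$ lies in the plateau of all $\psi_j$ whose cube $Q_j$ touches $Q_i$. I would then try to show, using Lemma~\ref{Lemma 1}(3), that the number of such neighbours is the same constant $M$ at every point.

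I expect this count to fail in general. The number of neighbouring Whitney cubes is bounded but not constant, so I do not expect to obtain $\le M^{-1}$ as worded. If the count cannot be made exact, the inequality I can actually establish with this construction is the lower bound $\phi_i\ge M^{-1}$ on $Q_i$ together with $\phi_i\le1$. I would then check which of these bounds is the one used in the later estimates.
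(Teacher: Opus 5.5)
Your construction ($\psi_i\equiv 1$ on $Q_i$, $\supp\psi_i\subset 2Q_i$, $\phi_i=\psi_i/\Psi$) is the standard Whitney partition of unity. The paper states the lemma without proof and refers to Stein, so this is the construction it has in mind. Your arguments for (1), (3) and (4) are sound. For (3), two details are worth writing down. First, Lemma~\ref{Lemma 1}(3) gives $5Q_i\subset U$, so $\Psi\ge 1$ on all of $2Q_i$. Second, $\phi_i$ vanishes on $\partial(2Q_i)$ but not on a neighbourhood of it. So for $x\in 2Q_i$ and $y\notin 2Q_i$, compare $x$ with the point of the segment $[x,y]$ that lies on $\partial(2Q_i)$.

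On (2) your diagnosis is right: the inequality is misprinted and should read $\phi_i\ge M^{-1}$ on $Q_i$. Your construction gives this at once from $\Psi\le M$. The printed version is not a weakness of your construction; it is false in general, whatever partition of unity satisfying (1) and (4) one picks.

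To see this, suppose some cube $Q_{i_0}$ has side length $\ell_{i_0}\ge \ell_j$ for all $j$. This happens whenever $U$ is bounded, since at most $|U|/\varepsilon^n$ of the interior-disjoint cubes can have side at least $\varepsilon$.

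\begin{itemize}
\item The centre $c$ of $Q_{i_0}$ has $\ell^\infty$-distance at least $\ell_{i_0}/2\ge \ell_j/2$ from each $Q_j$ with $j\neq i_0$.
\item Hence $c$ does not lie in the interior of any $2Q_j$ with $j\ne i_0$.
\item A continuous $\phi_j$ supported in $2Q_j$ must therefore vanish at $c$.
\item Then (4) forces $\phi_{i_0}(c)=1>M^{-1}$. Here $M\ge 2$, because adjacent cubes have intersecting $15Q$'s.
\end{itemize}

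So the enlarged-plateau attempt cannot succeed, and you should drop it.

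As for your last question, the paper never uses (2) in either direction. The proofs of Theorems~\ref{thm1} and~\ref{thm2} use only (1), (3), (4) and $0\le\phi_i\le 1$. The last of these enters the pointwise bounds \eqref{mx} and \eqref{eq5.13} on $|E_1u|$.
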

\subsection{Proof of Theorem \ref{thm1}. First part.}
Let $\Omega$ be a $W^{1,p}$-extension domain. We are going to construct a
linear Lebesgue $W^{1,p}$-extension operator.
 Let $\mathcal{W}= \{Q_i\}_{i\in I}$ be a Whitney covering of the open set
$U = \R^n\setminus \overline\Omega$ as in Lemma \ref{Lemma 1} and let $
\{\phi_i\}_{i\in I}$ be the Lipschitz partition of unity as in Lemma \ref
{Lemma 2}. We abbreviate $$
r_i := \diam{Q}_i \,.
$$

Let
$$
J:=\{ i\in I :\ r_i\leq 1\}
\qquad\text{and}\qquad
\mathcal{Q}=\{Q_i\}_{i\in J} \,.
$$
Next, let $x_i^\star \in\overline\Omega$ be as in Lemma \ref{Lemma 1} and
define
$$
B_{i}^\star := B(x_i^\star,r_i)\cap \Omega.
$$
Then
$$
B_i^\star \subset 20Q_i.
$$
It follows from Proposition \ref{HKT} that there exists a $C>0$ such that
$$
|Q_i|\leq C |B_{i}^\star|.
$$
Let
$$
V:= \{x\in \mathbb{R}^n: \dist(x,\Omega)\leq 2\}
$$
and for $x\in V\setminus\overline{\Omega}$ let
$$
I_x := \{ i\in I :\ x\in 2 Q_i \} \,.
$$
Then from Lemma \ref{Lemma 1}, the number of elements in $I_x$ is bounded
by $M$. Also, we claim that
$$
I_x \subset J \,.
$$ Indeed, if $i\in I\setminus J$, then $r_i> 1$ and hence for $z\in 2Q_i,
\dist(z,\Omega)\geq 2r_i> 2$, so $2Q_i\cap V=\emptyset$. Hence, $I_x
\subset J$, as claimed.
As a consequence,
$$
\sum_{i\in I_x}\phi_i(x)= \sum_{i\in I}\phi_i(x)=\sum_{i\in J}\phi_i(x)=1$$

for $x\in V\setminus \overline{\Omega}$.

Take a Lipschitz cutoff function $\Phi$ such that $\Phi(x)=0$, when $d(x,
\partial \Omega) >2$, $\Phi\equiv 1$ on $\Omega$ and $|\nabla \Phi(x)|\leq
\frac{1}{2}$ for $x\in V$.

For $u\in C_0^{\infty}(\mathbb{R}^n)$, we define $E_1$ by setting
\begin{equation}\label{def1}
E_1u(x):= \begin{cases}
 \left( \sum_{i\in J}(u)_{B_i^\star}\phi_{i}(x)\right)\Phi(x) &\ {\rm if}\
x\in \mathbb{R}^n\setminus\overline{\Omega},\\
 u(x) &\ {\rm if}\ x\in \Omega.
 \end{cases}
\end{equation}
Then $E_1$ is linear.\\

Let $\widetilde{\mathcal{W}}:= \{Q_i\in \mathcal{W}: Q_i\cap V \neq
\emptyset\}$. For $Q_0\in\widetilde{\mathcal{W}}$, we have  \begin
{equation}\label{eq1.}
\begin{split} \|\nabla E_1 u\|_{L^p(Q_0)}^p&= \|\nabla \left( \sum_{i\in
J}(u)_{B_i^\star}\phi_{i}\right)\Phi\|_{L^p(Q_0)}^p\\
&\leq \|\left(\nabla \sum_{i\in J}(u)_{B_i^\star}\phi_{i}\right)\Phi\|_
{L^p(Q_0)}^p+ \|\left(\sum_{i\in J}(Eu)_{B_i^\star}\phi_{i}\right)\nabla
\Phi\|_{L^p(Q_0)}^p\\ &\leq C\left(\|\nabla \sum_{i\in J}(u)_{B_i^
\star}\phi_{i}\|_{L^p(Q_0)}^p+ \|\sum_{i\in J}(u)_{B_i^\star}\phi_{i}\|_
{L^p(Q_0)}^p\right)\\ &\leq C\left(\| \sum_{i\in J}(u)_{B_i^\star} -(Eu)_
{B_0^\star})\nabla \phi_i\|_{L^p(Q_0)}^p +\|\sum_{i\in J}(u)_{B_i^
\star}\phi_{i}\|_{L^p(Q_0)}^p\right) \end{split}
\end{equation}
For fixed $i\in \mathbb{N}$,
\begin{equation}\label{ai.}
\begin{split}
|(Eu)_{B_{i}^\star}-(Eu)_{B_0^\star}|
 &=|\barint_{B_i^\star}Eu(x)dx-\barint_{B_0^\star}Eu(y)dy|\\
 &\leq \barint_{B_i^\star}\barint_{B_0^\star}|Eu(x)-Eu(y)|dxdy.
\end{split}
\end{equation}
Since $Eu \in W^{1,p}(\mathbb{R}^n)\cap C(\Omega)$, for $x,y\in \Omega$,
we get
\begin{equation}\label{maximal.}  |Eu(x)-Eu(y)|\leq C|x-y|(M(|\nabla Eu|)
(x)+ M(|\nabla Eu|)(y))
\end{equation}
Using \eqref{ai.}, \eqref{maximal.} and $(5)$ of Lemma \ref{Lemma 1}, we
get that
\begin{eqnarray}\label{eq2.}
 |(Eu)_{B_{i}^\star}-(Eu)_{B_0^\star}|&\leq& \barint_{B_i^\star}\barint_
{B_0^\star}C|x-y|(M(|\nabla Eu|)(x)+ M(|\nabla Eu|)(y))dxdy\nonumber\\  &
\leq & C\diam (Q_0)\Bigg[\barint_{B_i^\star}M(|\nabla Eu|)(x)dx+\barint_
{B_0^\star}M(|\nabla Eu|)(y)dy\Bigg].
\end{eqnarray}
Hence by using \eqref{eq2.}, Proposition \ref{HKT} and Lemma \ref{Lemma
1}, we conclude that
\begin{equation}\label{eq1..}
\begin{split}
 \| \sum_{i\in J}(Eu)_{B_i^\star} -(Eu)_{B_0^\star})\nabla \phi_i\|_{L^p
(Q_0)}^p
 &\leq \sum_{2Q_i\cap Q_0\neq \phi}\frac{C}{(\diam (Q_i))^p}\int_{Q_0} |
(Eu)_{B_{i}^\star}-(Eu)_{B_0^\star}|^pdx\\
 &\leq C \int_{Q_0} |M(M(|\nabla Eu|))(x)|^pdx.
\end{split}
\end{equation}
Combining \eqref{eq1.} and \eqref{eq1..}, we conclude that
\begin{equation}  \|\nabla E_1 u\|_{L^p(Q_0)}^p \leq C\int_{Q_0} |M(M(|
\nabla Eu|))(x)|^pdx.
\end{equation}
Write $v$ for the zero extension of $u$ to the complement of $\Omega.$
Now for $x\in Q_0,$
\begin{equation}\label{mx}
\begin{split}
 |\sum_{i\in J}(Eu)_{B_i^\star}\phi_i(x)|
 &\leq \sum_{2Q_i\cap Q_0\neq \phi}\barint_{B_i^\star}|Eu(y)|dy\\
 &\leq C \sum_{2Q_i\cap Q_0\neq \phi} \frac{|20 Q_0|}{|B_i^\star|} \barint
_{20Q_0} |v(y)|dy\\
 &\leq C \barint _{20Q_0} |v(y)|dy\\
&\leq C M(v)(x).
\end{split}
\end{equation}
Hence,
\begin{eqnarray}
\|\sum_{i\in J}(Eu)_{Q_i^\star}\phi_{i}\|_{L^p(Q_0)}^p&\leq& C\int_{Q_0}|M
(v)(x)|^pdx.
\end{eqnarray}
By summing over the cubes $Q_0$, we conclude that
\begin{equation}
\begin{split}
\int_{\mathbb{R}^n\setminus \overline{\Omega}} |\nabla E_1 u(x)|^pdx +
\int_{\mathbb{R}^n\setminus \overline{\Omega}} | E_1 u(x)|^pdx
 \leq C &\int_{\mathbb{R}^n}|M(v)(x)|^pdx \\
 &+\int_ {\mathbb{R}^n}|M(M(|\nabla Eu|))(x)|^p dx.
 \end{split}
\end{equation} Since the maximal operator is bounded on $L^p(\mathbb{R}
^n)$, when $p>1$, we get that
\begin{equation}\label{maxi}
\begin{split}
 \int_{\mathbb{R}^n\setminus \overline{\Omega}} |\nabla E_1 u(x)|^pdx +
\int_{\mathbb{R}^n\setminus \overline{\Omega}} |E_1 u(x)|^pdx \leq
C & \int_{\mathbb{R}^n}|v(x)|^pdx\\
&+\int_ {\mathbb{R}^n}|\nabla Eu(x)|^p dx.
\end{split}
\end{equation}

Combining the definition of $E$, \eqref{maxi} and $|\partial \Omega|=0$,
we conclude that
\begin{equation}\label{4.12}
\begin{split}
 \int_{\mathbb{R}^n\setminus {\Omega}} |\nabla E_1 u(x)|^pdx +\int_
{\mathbb{R}^n\setminus {\Omega}} |\nabla E_1 u(x)|^pdx \leq C&\int_
{\Omega}|u(x)|^pdx\\  &+\int_{\Omega}|\nabla u(x)|^pdx.
 \end{split}
\end{equation} Moreover, since $u\in C_0^{\infty}(\mathbb{R}^n)$, we have
$|u(x)-u(y)|\leq L|x-y|$ for some $L$ whenever $x,y\in \Omega$. This gives
us Lipschitz estimate together with \eqref{def1} implies that $E_1u$ is
Lipschitz in $\mathbb{R}^n\setminus \partial \Omega$. Hence we get weak
differentiability and it follows from \eqref{4.12} that $E_1u \in W^{1,p}
(\mathbb{R}^n)$.
Since $\Omega$ is a $W^{1,p}$-extension domain and $C_0^{\infty}(\mathbb
{R}^n)\cap W^{1,p}(\Omega)$ is dense in $W^{1,p}(\Omega)$, we conclude
that $E_1$ is a $W^{1,p}$-extension operator. From \eqref{mx} together
with the definition \eqref{def1}, we get that $E_1$ is a linear Lebesgue
$W^{1,p}$-extension operator.
\subsection{Proof of Theorem \ref{thm1}. Second part.}
Towards the second claim, let $\Omega$ be a bounded Lebesgue-Dirichlet $W^
{1,p}$-extension domain. Let $E$
be a bounded $W^{1,p}$-extension operator such that
\begin{equation}\label{ideq1}  \|\nabla E(u)\|_{L^p(\mathbb{R}^n)}\leq C \|\nabla u\|_{L^p(\Omega)}
\end{equation}
and
\begin{equation}\label{ideq}
 \| E(u)\|_{L^p(\mathbb{R}^n)}\leq C \| Eu\|_ {L^p(\Omega)}
\end{equation}
for all $u\in W^{1,p}(\Omega)$.
Take $u(x)= \chi_{\Omega}(x)$. From \eqref{ideq1}, we have $\|\nabla E
(u)\|_
{L^{p}(\mathbb{R}^n)}=0$, which implies that $E(u)$ is a constant
function.
Since $E(u)\in W^{1,p}(\mathbb{R}^n)$, $E(u)(x)$ will be $\chi_{\mathbb{R}
^n}
(x)$, which contradicts \eqref{ideq}.
\subsection{Proof of Theorem \ref{thm1}. Third part.}
Now assume that $\Omega$ is unbounded and let $E_1$ be a Lebesgue--
Dirichlet $W^{1,p}(\Omega)$-extension operator. Then $E_1$ is a  bounded
$W^{1,p}(\Omega)$-extension operator and there exists a $C>0$ such that
\begin{equation}\label{meq1}
 \|\nabla E_1(u)\|_{L^p(\mathbb{R}^n)}\leq C \|\nabla u\|_{L^p(\Omega)}
 \end{equation}
 and
 \begin{equation}
 \| E_1(u)\|_{L^p(\mathbb{R}^n)}\leq C \|u\|_{L^p(\Omega)},
 \end{equation}  for all $u\in W^{1,p}(\Omega)$. Define $E_2=E_1$,
$$
 E_2: W^{1,p}(\Omega)\to W^{1,p}_{\loc}(\mathbb{R}^n) \,.
$$
 By using \eqref{meq1}, we get that
 \begin{equation}
 \|\nabla E_2(u)\|_{L^{p}(\mathbb{R}^n)}=\|\nabla E_1(u)\|_{L^{p}
(\mathbb{R}^n)} \leq C\|\nabla u\|_{L^p(\Omega)},
 \end{equation}
 which is the claimed bound.

Conversely, let $\Omega$ be unbounded and assume there is an extension
operator
\begin{equation}
 E_2: W^{1,p}(\Omega) \to W^{1,p}_{\loc}(\mathbb{R}^n)
\end{equation}
satisfying, for some constant $C$,
 \begin{equation}\label{homext}
 \|\nabla E_2(u)\|_{L^p(\mathbb{R}^n)}\leq C \|\nabla u\|_{L^p(\Omega)}
 \qquad\text{for all}\ u\in W^{1,p}(\Omega) \,.
 \end{equation}
Let $\mathcal{Q}=\{Q_i\}_{i\in I}$ be a Whitney covering of $\mathbb{R}^n
\setminus \overline{\Omega}$ and let $\{\phi_i\}_{i\in I}$ be the
associated Lipschitz partition of unity. For each $i\in I$, let $x_i^\star
$ be as in Lemma \ref{Lemma 1}. Set $r_i=\diam{Q_i}$. We define $$B_{i}^
\star= B(x_{i^\star},r_i)\cap \Omega.$$ Then $$B_{i}^\star \subset20 Q_i.
$$ From Lemma \ref{LemmaMCD}, there exist $C>0$, such that $$|Q_i|\leq C |
B_i^\star|.$$
Let $u\in W^{1,p}(\Omega)\cap C_0^{\infty}(\mathbb{R}^n)$ and $v$ be the zero extension of $u$. Define $E_1$ by setting
\begin{equation}\label{def2} E_1u(x):= \begin{cases}
 \sum_{i\in I}(E_2u)_{B_i^\star}\phi_{i}(x), &\ {\rm if}\ x\in \mathbb{R}
^n\setminus\overline{\Omega},\\
 u(x),&\ {\rm if}\ x\in \Omega.
 \end{cases}
\end{equation}
Then $E_1$ is a linear map. \\
We continue towards $\|\nabla E_1 u\|_{L^p(B)}\leq C\|\nabla u\|_{L^p
(\Omega)}$.\\
For $Q_0\in\mathcal Q$, we have
\begin{equation}\label{eq1*}
\begin{split}
\|\nabla E_1 u\|_{L^p(Q_0)}^p&=\|\nabla \sum_{i=1}^\infty (E_2u)_{B_i^
\star}\phi_i\|_{L^p(Q_0)}^p\\
&\leq \|\nabla (\sum_{i=1}^\infty((E_2u)_{B_i^\star} -(E_2u)_{B_0^
\star}+(E_2u)_{B_0^\star})\phi_i)\|_{L^p(Q_0)}^p\\
&\leq \| \sum_{i=1}^\infty((E_2u)_{B_i^\star} -(E_2u)_{B_0^\star})\nabla
\phi_i\|_{L^p(Q_0)}^p\\ &\leq \|\sum_{2Q_i\cap Q_0\neq \phi}((E_2u)_{B_i^
\star} -(E_2u)_{B_0^
\star})\nabla \phi_i\|_{L^p(Q_0)}^p.
\end{split}
\end{equation}
Precisely as in the proof of Theorem \ref{thm1}, we get that
\begin{equation}\label{eq5.10} \|\nabla E_1u\|^p_{L^p(Q_0)} \leq C \int_
{Q_0}|M(M(|\nabla E_2u|))(x)|^p dx
\end{equation}
By summing over $Q_0$'s we conclude that
\begin{equation}\label{eq5.11}
 \int_{\mathbb{R}^n\setminus \overline{\Omega}}|\nabla E_1u(x)|^pdx \leq C
\int_{\mathbb{R}^n}|M(M(|\nabla E_2u|))(x)|^p dx.
\end{equation}
By using the boundedness of the maximal operator on $L^p(\mathbb{R}^n)$,
when $p>1$ and \eqref{homext}, from \eqref{eq5.11}, we get that
\begin{equation}
 \int_{\mathbb{R}^n\setminus \overline{\Omega}}|\nabla E_1u(x)|^pdx \leq C
\int_{\mathbb{R}^n}|\nabla E_2u(x)|^pdx\leq C \int_{\Omega}|\nabla u(x)|
^pdx. \end{equation}
Let $x\in \mathbb{R}^n \setminus \overline{\Omega}$. Pick $Q_0\in \mathcal
{Q}$ such that $x\in Q_0$. Then
\begin{equation}\label{eq5.13}
\begin{split}  |E_1u(x)| &=|\sum_{i\in I}(E_2u)_{B_i^\star}\phi_{i}(x)|\\
 &\leq \sum_{2Q_i\cap Q_{0}\neq \phi}\barint_{B_i^\star}|E_2u(y)|dy\\
 &\leq \max_{2Q_i \cap Q_{0}\neq \phi} \barint_{B_i^\star}|u(y)|dy\\
 &\leq CM(v)(x). \end{split}
\end{equation}
By summing over the cubes $Q_0$, we conclude that
\begin{equation}\label{maxi1}
\int_{\mathbb{R}^n\setminus \overline{\Omega}} |E_1 u(x)|^pdx
\leq C \int_{\mathbb{R}^n}|M(v)(x)|^pdx.
\end{equation}
Since the maximal operator is bounded on $L^p(\mathbb{R}^n)$ when $p>1$,
and $|\partial\Omega|=0$, we conclude from \eqref{eq5.10} and \eqref
{maxi1} that \begin{equation}
 \int_{\mathbb{R}^n\setminus \Omega} | E_1 u(x)|^pdx + \int_{\mathbb{R}^n
\setminus \Omega} |\nabla E_1 u(x)|^pdx
\leq C \int_{\Omega}|u(x)|^pdx + \int_{\mathbb{R}^n\setminus \Omega} |
\nabla u(x)|^pdx.
\end{equation}
By \eqref{eq5.13} and the density of $C_0^{\infty}(\mathbb{R}^n)\cap W^
{1,p}(\Omega)$ in $W^  {1,p}(\Omega)$, we conclude that $E_1$ is a linear
Lebesgue-Dirichlet $W^{1,p}$-extension operator.
\subsection{Proof of Theorem \ref{thm2}}
Let $1<p<\infty$ and let $\Omega\subset\R^n$ be a bounded $W^{1,p}$-
extension domain and fix a ball $B\subset\mathbb{R}^n$ such that $\Omega
\subset \subset B$.

Let $\mathcal{W}_1$ be a Whitney covering of $\mathbb{R}^n\setminus
\overline{\Omega}$ and $\{\phi_i\}_{i\in I}$ be the associated Lipschitz
partition of unity as in Lemmas \ref{Lemma 1} and \ref{Lemma 2}.

Define
\begin{equation*}
 \mathcal{W}_2:= \{Q_i\in \mathcal{W}_1 \hspace{2mm} \text{such
that}\hspace{2mm} Q_i\cap B\neq \emptyset\}.
\end{equation*}
For each $Q_i\in \mathcal{W}_2$, define
\begin{equation}
 B_i^\star:= B(x_i^\star, \diam(Q_i))\cap \Omega.
\end{equation}
Since $\Omega$ is a bounded $W^{1, p}$-extension domain, for every $u\in
W^{1, p}(\Omega)$, there exists a function $E_1(u)\in W^{1, p}(B)$ such
that $E_1(u)\big|_{\Omega}\equiv u$ and
\begin{equation} \label{eq:extension1}
 \|E_1(u)\|_{W^{1, p}(B)}\leq C\|u\|_{W^{1,p}(\Omega)},
\end{equation}
where $C>0.$\\
 Define
\begin{equation}
 E_2(u):= E_1(u-u_\Omega)+u_\Omega.
\end{equation}
Since $\Omega$ is a bounded $W^{1,p}$-extension domain, by using \cite
{HDKP}, we get that
\begin{equation}\label{eq:extension2}
\|\nabla E_2(u)\|_{L^{ p}(B)}\leq C\|\nabla u\|_{L^{p}(\Omega)}, \end
{equation}
where $C>0$.\\
Define $a_i$ for each $Q_i\in \mathcal{W}_2$ by setting
\begin{equation}
 a_i:= \begin{cases}
 (u)_{B_i^\star}, &\ {\rm if}\ \diam(Q_i)\leq 1,\\
 (u)_\Omega, &\ {\rm if}\ \diam(Q_i)>1.
 \end{cases}
\end{equation}
Define $E_3$ by setting
\begin{equation}\label{def}
E_3u(x):= \begin{cases}
 \sum_{i=1}^{\infty}a_i\phi_{i}(x), &\ {\rm if}\ x\in B\setminus\overline
{\Omega},\\
 u(x),&\ {\rm if}\ x\in \Omega.
\end{cases}
\end{equation}
The desired estimates follow as in the proof of the third part of Theorem
\ref{thm1}, using
\eqref{eq:extension2} instead of \eqref{homext} and Proposition \ref{HKT}
instead of Lemma \ref{LemmaMCD}. Notice that there are only finitely many
$Q_i$ with $\diam(Q_i)>1$ in $\mathcal{W}_2$.

\section{Proof of the Sobolev interpolation inequality}
In this section we prove our main result, a Sobolev interpolation
inequality with the $\theta$-geometric mean on extension domains.
\begin{proof}[Proof of Theorem \ref{sobint}]
 Let $p,r,s,\theta$ be as in the theorem, but assume initially that $s>1$.
Let $u\in W^{1,p}(\Omega)\cap L^s(\Omega)$ and let $Eu$ be its extension.
The key observation is that we have not only
$$
\| \nabla E u \|_{L^{p}(\R^n)} \leq C \| u \|_{W^{1,p}(\Omega)} \,,
$$
but also
$$\| Eu \|_{L^s(\R^n)} \leq \| u \|_{L^s(\Omega)} \,.
$$
This is the content of Theorem \ref{thm1}. Therefore, the whole-space
inequality \eqref{eq1.2}
implies that
$$
\| u \|_{L^r(\Omega)} \leq \| Eu \|_{L^r(\R^n)} \leq C \|\nabla Eu\|_{L^p
(\R^n)}^\theta \| Eu\|_{L^s(\R^n)}^{1-\theta} \leq C \| u \|_{W^{1,p}
(\Omega)}^\theta \| u \|_{L^s(\Omega)}^{1-\theta} \,.
$$
This is the claimed inequality for $s>1$.
The extension to $0<s\leq 1$ follows from the result of \cite{BCLS} and,
more precisely, from Theorems 3.1, 3.2, 3.3 and 3.4 in the cases $p<n$,
$p>n$, $p=n>1$ and $p=n=1$, respectively.
\end{proof}
\section{Proof of the generalized Keller--Lieb--Thirring inequality}\label
{sec:kellerliebthirring}
In this section we show, in the setting of Sobolev extension domains, that
the generalized Keller--Lieb--Thirring inequality is equivalent to a
Sobolev interpolation inequality.
\begin{proof}[Proof of Theorem \ref{thm1.5}]
Interchanging the infima over $u$ and $W$, we find
\begin{align*}
 & \inf_{0\neq W \in L^{\gamma+n/p}(\Omega)} \frac{E(W) + 1}{(\int_
\Omega W^
 {\gamma+n/p}\,dx)^{1/\gamma}} \\
 &= \inf_{0\not\equiv u\in W^{1,p}(\Omega)}
\inf_{w>0} w^{-\frac{\gamma+n/p}{\gamma}} \left( \frac{\|u\|_{W^{1,p}
 (\Omega)}^p}{\| u\|_{L^p(\Omega)}^p} - \sup_{\|W\|_{L^
{\gamma+n/p} (\Omega)}=w} \frac{\int_\Omega W |u|^p\,dx}{\| u
\|_{L^p(\Omega)}^p}
 \right).
\end{align*}
By H\"older's equality and its sharpness, we have
$$
\sup_{\|W\|_{L^{\gamma+n/p}(\Omega)}=w} \int_\Omega W |u|^p\,dx = w \| u
\|_{L^r(\Omega)}^p \,.
$$
Thus, we have shown that
$$
\inf_{0\neq W \in L^{\gamma+n/p}(\Omega)} \frac{E(W) + 1}{(\int_\Omega W^
{\gamma+n/p}\,dx)^{1/\gamma}} = \inf_{0\not\equiv u\in W^{1,p}(\Omega)}
\inf_{w>0} w^{-\frac{\gamma+n/p}{\gamma}} \left( \frac{\|u\|_{W^{1,p}
(\Omega)}^p}{\| u\|_{L^p(\Omega)}^p} - w \frac{\| u \|_{L^r(\Omega)}^p}{\|
u \|_{L^p(\Omega)}^p} \right).
$$
An elementary computation shows that, for any $A,B>0$,
$$
\inf_{w>0} w^{-\frac{\gamma+n/p}{\gamma}} \left( A - w B \right) = - \frac
{(n/(p\gamma))^{n/(p\gamma)}}{(1+n/(p\gamma))^{1+n/(p\gamma)}} A^{-\frac
{n/p}{\gamma}} B^{\frac{\gamma+n/p}{\gamma}}
$$
Thus, we obtain
\begin{align*}
& \inf_{0\neq W \in L^{\gamma+n/p}(\Omega)} \frac{E(W) + 1}{(\int_\Omega
W^
{\gamma+n/p}\,dx)^{1/\gamma}} \\ & = - \frac{(n/(p\gamma))^{n/(p\gamma)}}
{(1+n/(p\gamma))^{1+n/(p\gamma)}}
\sup_{0\not\equiv u\in W^{1,p}(\Omega)} \left( \frac{\|u\|_{W^{1,p}
(\Omega)}^p}{\| u\|_{L^p(\Omega)}^p} \right)^{-\frac{n/p}\gamma} \left(
\frac{\| u \|_{L^r(\Omega)}^p}{\| u \|_{L^p(\Omega)}^p} \right)^{\frac
{\gamma+n/p}{\gamma}} \\
& = - \frac{(n/(p\gamma))^{n/(p\gamma)}}{(1+n/(p\gamma))^{1+n/(p\gamma)}}
\left( S_r^\Omega \right)^{-\frac{p(\gamma+n/p)}{\gamma}}.
\end{align*}
This proves the claimed bound.
\end{proof}
\bibliographystyle{alpha}
\bibliography{bibliography}

\end{document}